\newtheorem{thm}{Theorem}[section]
\newtheorem{prop}[thm]{Proposition}
\newtheorem{lemma}[thm]{Lemma}
\newtheorem{cor}[thm]{Corollary}
\theoremstyle{definition}
\newtheorem{defn}[thm]{Definition}
\theoremstyle{remark}
\newtheorem{rk}[thm]{Remark}
\newtheorem{ex}[thm]{Example}
\newcommand\into{\hookrightarrow}
\newcommand\Q{\mathbb{Q}}
\newcommand\C{\mathbb{C}}
\newcommand\KC{\mathcal{C}}
\newcommand\F{\mathbb{F}}
\newcommand\Z{\mathbb{Z}}
\newcommand\R{\mathbb{R}}
\renewcommand{\div}{\operatorname{div}}
\newcommand\Spec{\mathop{\rm Spec}\nolimits}
\renewcommand\O{\mathcal{O}}
\newcommand\Char{\mathop{\rm char}\nolimits}
\newcommand\ord{\mathop{\rm ord}\nolimits}
\newcommand\supp{\mathop{\rm supp}\nolimits}
\newcommand\Reg{\mathop{\rm Reg}\nolimits}
\newcommand{\Div}{\operatorname{Div}}
\newcommand{\Pic}{\operatorname{Pic}}
\newcommand{\len}{\operatorname{length}}
\renewcommand{\Im}{\operatorname{Im}}
\newcommand{\To}{\longrightarrow}
\newcommand{\BP}{{\mathbb P}}
\newcommand{\surj}{\longrightarrow\!\!\!\!\to}
\title{Computing canonical heights using arithmetic intersection theory}
\author{Jan Steffen M\"uller\thanks{Supported by DFG-grant STO 299/5-1}}
\begin{document}
\maketitle

\section{Introduction}\strut\\[1mm]\label{intro0}
The canonical height $\hat{h}$ on an abelian variety $A$ defined over a global field $k$ is an object of fundamental importance in the study of the arithmetic of $A$. 
For many applications it is required to {\em compute}  $\hat{h}(P)$ for a given point $P\in A(k)$.  
For instance, given generators of a subgroup of the Mordell-Weil group $A(k)$ of finite index, this is necessary for most known approaches to the computation of generators of the Mordell-Weil group $A(k)$. 
Furthermore, the regulator of $A(k)$, which appears in the statement of the conjecture of Birch and Swinnerton-Dyer, is defined in terms of the canonical height and thus we need the ability to compute canonical heights in order to gather numerical evidence for the conjecture in the case of positive rank.  

Here we are concerned with the case where $A$ is the Jacobian variety of a smooth projective curve $C$ of genus $g$ over $k$. 
If $g\le3$, it is known how to compute canonical heights using arithmetic on an explicit
embedding of the Kummer variety $K$ of $A$ into $\BP^{2^g-1}$ (cf. \cite{SilvermanHeights,
FlynnSmart, StollH2, StollG3} and \cite[Chapter~3]{thesis}). 
Trying to imitate this approach in the higher genus case quickly causes problems, as the
Kummer variety becomes rather complicated (see the discussion in 
\cite[Chapter~4]{thesis}).

Instead we propose to use a result due to Faltings \cite{Faltings} and Hriljac
\cite{Hriljac} (see Theorem \ref{fh}), expressing the canonical height in terms of
arithmetic intersection theory. 
Here the non-archimedean intersection multiplicities take
place on regular models of $C$, whereas the archimedean intersection multiplicities are given in terms of Green's functions on the Riemann surface associated to $C$. 
In Section \ref{intro} we discuss the local theory before putting the local results together in Section \ref{Falt-Hril}, culminating in Theorem \ref{fh} which establishes the connection between $\hat{h}$ and arithmetic intersection theory.

In Section \ref{compgns} we show how the necessary arithmetic intersection multiplicities can be computed in practice. 
In the non-archimedean case we reduce the problem to the computation of certain Gr\"obner bases. 
Then we show that the archimedean intersection multiplicities can be computed using theta functions with respect to the complex torus $\C^g/\Lambda$ associated to $A$. 
In order to make these steps practical, we need to be able to decompose divisors into prime divisors and to work on $\C^g/\Lambda$ explicitly.

We present a practical algorithm, implemented in the computer algebra system {\tt Magma} \cite{Magma}, for the computation of $\hat{h}$ for hyperelliptic curves in Section \ref{hyper} by explaining how these two points can be resolved in that case. 
Several examples are given in Section \ref{exs}, where the performance of the algorithm is investigated as well. 
Finally, we elaborate on what is needed to extend the algorithm to non-hyperelliptic curves. 
A different, but similar, algorithm for the computation of $\hat{h}$ using arithmetic intersection theory has been developed independently by Holmes \cite{David}.

{\em Acknowledgments:} The research presented here is also described in Chapters~5 and 6 of my PhD dissertation \cite{thesis}. I would like to thank my advisor Michael Stoll for suggesting this topic and for his constant help and encouragement.

Some of the research described in this work was conducted while I was visiting the University of Warwick and the University of Sydney. It is my pleasure to thank both institutions for their hospitality, as well as David Holmes, Samir Siksek and Steve Donnelly for their invitations.

\section{Local N\'eron symbols}\strut\\[1mm]\label{intro}
In this section we discuss the theory of local N\'eron symbols whose existence was first
proved by N\'eron in \cite{quasi}. We shall present an interpretation that is suitable for
explicit computations, following essentially Gross \cite{Gross} and Hriljac \cite{Hriljac}. The content of the latter work is also discussed by Lang in \cite{LangArak}. In order to present these results, we need to briefly recall some basic notions of intersection theory on arithmetic surfaces.

In the following~3 sections $C$ denotes a smooth projective geometrically connected curve of positive genus $g$, defined over a field $k$ which will be specified as we go along. 
Let $\Div(C)$ denote the group of divisors on $C\times_k k^{\mathrm{sep}}$, where
$k^\mathrm{sep}$ is a separable closure of $k$.
For an extension $k'$ of $k$ contained in $k^\mathrm{sep}$ we denote the subgroup of $k'$-rational divisors by $\Div(C)(k')$. 
For each $n\in\Z$ the set $\Div^n(C)$ is defined to be the set of divisors of degree equal to $n$ and we set
\[\Div^n(C)(k):=\Div^n(C)\cap \Div(C)(k).\]
If $f\in k(C)^*$ and $D=\sum_jm_j(Q_j)\in\Div^0(C)(k)$ is relatively prime to $\div(f)$, then we set
$f(D):=\prod_jf(Q_j)^{m_j}.$

Let $k$ be a non-archimedean local field valuation $v$ with discrete valuation ring
$R$, uniformizing element $\pi$, residue field $\mathfrak{k}$ and spectrum $S=\Spec(R)$.

\begin{defn} A {\em model} $\psi:\KC\to S$ of $C$ over $S$ is an integral, flat,
	Noetherian $S$-scheme of dimension~2 whose generic fiber is isomorphic to $C$. 
\end{defn}
Let $\psi:\KC\to S$ denote a model of $C$ over $S$. 
By abuse of notation, we often omit $\psi$ and simply call $\KC$ a model of $C$ over $S$.
We denote	the special fiber of $\KC$ by $\KC_v$.
Then $\KC_v$ is connected by \cite[Corollary~8.3.6]{LiuBook}.

Let $\Div(\KC)$ denote the group of Weil divisors on $\KC$.
If $D\in\Div(C)(k)$ is prime, then we write $D_{\KC}$ for the Zariski closure of $D$ on $\KC$. This is a prime divisor on $\KC$ and we extend the operation $D\mapsto D_{\KC}$ to all of $\Div(C)(k)$ by linearity. 

We want to use intersection theory on models of $C$ over $S$. 
Although this can be defined more generally, it is convenient to restrict to proper regular models. 
For a proof that such a model always exists, see \cite[\S8.3.4]{LiuBook}.
In our algorithm we shall need a proper regular model of a specific kind; this will be
discussed in Section \ref{RegMods}. 

So suppose that $\psi:\KC\To S$ is a proper regular model of $C$ over $S$. 

\begin{defn}\cite[\S III.2]{LangArak}\label{AsInt}
Let $D,E$ be two effective divisors on $\KC$ without common component and let $P\in\KC_v$ be a closed point. Let $I_{D,P}$ and $I_{E,P}$ be defining ideals of $D$ and $E$, respectively, in the local ring $\O_{\KC,P}$. Then the integer  
\[
 i_P(D,E):=\len_{\O_{\KC,P}}\left(\O_{\KC,P}/(I_{D,P}+I_{E,P})\right)
\]
is called the {\em intersection multiplicity of $D$ and $E$ at $P$}.
The {\em total intersection multiplicity of $D$ and $E$} is
\[
 i_v(D,E):=\sum_Pi_P(D,E)[\mathfrak{k}(P):\mathfrak{k}],
\]
where the sum is over all closed points $P\in \KC_v$.
Finally, we extend $i_P$ and $i_v$ by linearity to divisors $D,E\in\Div(\KC)$ without common component.
\end{defn}

A {\em fibral $\Q$-divisor} is an element of the $\Q$-vector space $\Div_v(\KC)$ generated by the
irreducible components of $\KC_v$. If $D\in\Div^0(C)(k)$, then we denote by $\Phi_{v,\KC}(D)$ a fibral $\Q$-divisor on $\KC$ such
that $D_\KC+\Phi_{v,\KC}(D)$ has trivial intersection with all elements of $\Div_v(\KC)$.
That such a fibral $\Q$-divisor always exists was first proved by Hriljac, cf. \cite[Theorem~III.3.6]{LangArak}.

Now we have assembled all ingredients necessary to define the central objects of this section in the non-archimedean case.
\begin{defn}\label{DefNerSym1}
The {\em local N\'eron symbol on $C$ over $k$} is defined on divisors $D,E\in \Div^0(C)(k)$ with disjoint support by
\[ \langle D, E\rangle_v := i_v(D_{\KC}  + \Phi_{v,\KC}(D) , E_{\KC})\log \#\mathfrak{k}.\] 
\end{defn}
\begin{rk}\cite[Theorem~III.5.2]{LangArak}.
The local N\'eron symbol depends neither on the choice of the regular model $\KC$ nor on
the choice of $\Phi_{v,\KC}(D)$.
\end{rk}

Next we let $k$ denote an archimedean local field.
We can assume $k=\C$ (see Proposition \ref{locneronprops}(d) below), so that $C(k)$ is actually a compact Riemann surface. 
In arithmetic intersection theory one uses Green's functions to define archimedean
intersection multiplicities, but for us a somewhat weaker notion suffices.
The next result follows from \cite[Theorem~13.5.2]{LangFund} combined with
\cite[Proposition~II.1.3]{LangArak}, see also \cite[\S3]{Gross}.

\begin{prop}\label{Green} 
Let $X$ be a compact Riemann surface and let $d\mu$ be a positive volume form on $X$ such that $\int_X d\mu=1$. For each $E\in\Div(X)$ there exists a function 
\[ 
 g_E: X\setminus\supp(E)\to \R,
\]
called an {\em almost-Green's function with respect to $E$ and $d\mu$}, such that the following properties are satisfied:
\begin{itemize}
 \item[(i)] The function $g_E$ is $C^\infty$ outside of $\supp(E)$ and has a logarithmic
	 singularity along $E$.
 \item[(ii)] 
 \[
  \deg(E)d\mu=\frac{i}{\pi}\partial\overline{\partial}g_E.
 \]
\end{itemize}
\end{prop}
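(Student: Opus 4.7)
The proof reduces by linearity to the case where $E = (P)$ is a single point of degree one. Indeed, if for each $P \in X$ we produce a function $g_P$ satisfying (i) and (ii) with $E = (P)$ and $\deg(E) = 1$, then for $E = \sum_j n_j(P_j)$ we may set $g_E := \sum_j n_j g_{P_j}$, and this is $C^\infty$ off $\supp(E)$, has logarithmic singularities of the prescribed multiplicities along $E$, and satisfies
\[
\tfrac{i}{\pi}\partial\overline\partial g_E \;=\; \sum_j n_j \,d\mu \;=\; \deg(E)\, d\mu
\]
on $X \setminus \supp(E)$ by linearity.

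For a single point $P$, the plan is to solve a Poisson-type equation on $X$. Consider the closed real $(1,1)$-current $T := d\mu - \delta_P$, where $\delta_P$ denotes integration against $P$. Since $\int_X d\mu = 1 = \deg((P))$, the current $T$ has total mass zero and hence vanishes in $H^{1,1}(X,\R)$. Because a compact Riemann surface is Kähler, the $\partial\overline\partial$-lemma (applied to currents, not just smooth forms) yields a real distribution $g_P$ on $X$ with
\[
\tfrac{i}{\pi}\partial\overline\partial g_P \;=\; T \;=\; d\mu - \delta_P.
\]

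Next I would verify the regularity assertions. Away from $P$, the right-hand side equals the smooth form $d\mu$, so elliptic regularity for the $\partial\overline\partial$-operator (equivalently, for the Laplacian) shows that $g_P$ is $C^\infty$ on $X \setminus \{P\}$ and there satisfies $\tfrac{i}{\pi}\partial\overline\partial g_P = d\mu$, which is condition (ii). Near $P$, pick a holomorphic coordinate $z$ vanishing at $P$; the Poincar\'e--Lelong formula gives $\tfrac{i}{\pi}\partial\overline\partial \log|z| = \delta_P$ locally, so $g_P - \log|z|$ solves an equation with smooth right-hand side and is therefore $C^\infty$ in a punctured neighborhood of $P$, with a removable singularity at $P$. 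This is exactly the logarithmic singularity condition (i).

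The step I expect to be the main obstacle is the combination of the $\partial\overline\partial$-lemma with the singular source term $\delta_P$, together with the elliptic regularity needed to pass from a distributional solution to one which is genuinely $C^\infty$ off $\supp(E)$ with a controlled logarithmic singularity on $\supp(E)$. Both of these ingredients are however classical on compact Riemann surfaces and are exactly what is assembled in \cite[Theorem~13.5.2]{LangFund} together with \cite[Proposition~II.1.3]{LangArak}; one can alternatively construct $g_P$ directly from the Arakelov Green's function attached to $d\mu$, which simultaneously provides (i) and (ii) and differs from $g_P$ only by an additive constant.
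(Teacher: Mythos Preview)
The paper does not actually prove this proposition: the sentence immediately preceding the statement simply attributes it to \cite[Theorem~13.5.2]{LangFund} combined with \cite[Proposition~II.1.3]{LangArak} (and \cite[\S3]{Gross}). Your sketch is the standard argument underlying those references---reduce by linearity to a single point, solve the current equation $\tfrac{i}{\pi}\partial\overline\partial g_P = d\mu - \delta_P$ via the $\partial\overline\partial$-lemma on the compact K\"ahler surface $X$, then invoke elliptic regularity and Poincar\'e--Lelong---so there is nothing substantive to compare: you have supplied exactly what the paper defers to the literature.

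One minor slip to fix: with your current equation and the Poincar\'e--Lelong identity $\tfrac{i}{\pi}\partial\overline\partial\log|z|=\delta_P$, the combination with smooth $\partial\overline\partial$ near $P$ is $g_P+\log|z|$, not $g_P-\log|z|$; equivalently $g_P\sim -\log|z|$ there. This does not affect the conclusion, since condition~(i) only demands \emph{a} logarithmic singularity without specifying sign, but the local regularity step should be corrected accordingly.
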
 

Let $v$ be the absolute value on $k$ and fix a volume form $d\mu$ on $C(k)$ such that
$\int_X d\mu=1$.
\begin{defn}
The pairing $\langle \cdot, \cdot\rangle_v$ that associates to all $D,E\in\Div^0(C)(k)$ with disjoint support the {\em intersection multiplicity} 
\[
i_v(D, E) := g_{E}(D) 
\]
is called the {\em local N\'eron symbol on $C$ over $k$}.
\end{defn}
\begin{rk}
	It follows from \cite[Proposition~II.1.3]{LangArak} that the local N\'eron symbol does
	not depend on the choice of $g_E$ or $d\mu$. 
	See also \cite[Theorem~III.5.3]{LangArak}.
\end{rk}

We list the most important properties of the local N\'eron symbol, both non-archimedean and archimedean, in the following proposition. 
\begin{prop}\label{locneronprops}(N\'eron, Gross, Hriljac) Let $k$ be a local field with
	valuation $v$. The local N\'eron symbol satisfies the following properties, where $D,E\in\Div^0(C)(k)$ have disjoint support.
\begin{itemize}
 \item[(a)] The symbol is bilinear.
 \item[(b)] The symbol is symmetric.
 \item[(c)] If $f\in k(C)^*$, then we have $\langle D, \div(f)\rangle_v=-\log|f(D)|_v$. 
 \item[(d)] If $k'$ is a finite extension of $k$ with valuation $v'$ extending $v$, then we have
	 $\langle D,E\rangle_{v'}=[k':k]\langle D,E\rangle_v$.
\end{itemize} 
\end{prop}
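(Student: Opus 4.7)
The plan is to verify (a)--(d) one at a time, handling the non-archimedean and archimedean cases side by side. Two facts drive the non-archimedean arguments: the intersection pairing $i_v$ on a proper regular arithmetic surface is symmetric, and the restriction of a principal divisor $\div_\KC(f)$ to any irreducible component $\Gamma$ of the special fiber, viewed as a rational function on the proper $\mathfrak{k}$-curve $\Gamma$, has divisor of degree zero; consequently $i_v(\div_\KC(f), F) = 0$ for every fibral $F$.

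Bilinearity (a) is immediate: the assignment $D \mapsto D_\KC$ is linear, $D \mapsto \Phi_{v,\KC}(D)$ is $\Q$-linear modulo $\Q\cdot\KC_v$ (whose intersection with any degree-zero horizontal divisor vanishes), and $i_v$ is bilinear; in the archimedean case one sets $g_{E+E'}:=g_E+g_{E'}$. For symmetry (b), the non-archimedean case reduces via symmetry of $i_v$ to the identity $i_v(\Phi_{v,\KC}(D), E_\KC) = i_v(D_\KC, \Phi_{v,\KC}(E))$; applying the defining property of each $\Phi_{v,\KC}$ to the other converts both sides into $-i_v(\Phi_{v,\KC}(D),\Phi_{v,\KC}(E))$. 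The archimedean analogue is the classical reciprocity $g_D(E)=g_E(D)$ for degree-zero divisors, obtained by Stokes' theorem applied to the form $g_D\partial\bar\partial g_E - g_E\partial\bar\partial g_D$.

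For property (c), decompose $\div_\KC(f) = \div(f)_\KC + F_f$ with $F_f$ fibral. The defining property of $\Phi_{v,\KC}(D)$ kills the intersection $(D_\KC + \Phi_{v,\KC}(D))\cdot F_f$, and the second fact above kills $\Phi_{v,\KC}(D)\cdot\div_\KC(f)$, leaving $\langle D,\div(f)\rangle_v = i_v(D_\KC,\div_\KC(f))\log\#\mathfrak{k}$. Computing this prime-by-prime in the support of $D$ and applying the product formula for the norm $N_{k(Q)/k}$ reproduces $-\log|f(D)|_v$ after the standard bookkeeping of ramification indices and residue-field degrees. In the archimedean setting, $g:=-\log|f|_v$ has logarithmic singularities along $\div(f)$ and is harmonic on the complement; since $\deg\div(f) = 0$, property (ii) of Proposition~\ref{Green} holds trivially, so $g$ is an almost-Green's function for $\div(f)$ and $\langle D,\div(f)\rangle_v = g(D) = -\log|f(D)|_v$ by independence of the symbol from the choice of $g_E$.

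For (d), choose a regular model $\KC'$ of $C$ over the valuation ring of $k'$. Local intersection multiplicities scale by the ramification index $e$ under base change (lengths of the relevant artinian quotients grow by $e$ when $\pi$ is replaced by its $e$-th-power expression in $\pi'$), while $\log\#\mathfrak{k}' = f\log\#\mathfrak{k}$, together producing the factor $ef = [k':k]$; independence of the symbol from the choice of model and fibral correction absorbs the remaining ambient data. The only nontrivial archimedean case is $k=\R$, $k'=\C$, where the factor of $2$ arises from the corresponding doubling of the integration on the Riemann surface. The main obstacle I anticipate is the non-archimedean computation in (c): equating $i_v(D_\KC,\div_\KC(f))\log\#\mathfrak{k}$ with $-\log|f(D)|_v$ demands careful tracking of ramification indices and residue-field degrees at every prime of $k(Q)$ above $v$, as well as the correct normalization of $|\cdot|_v$ on finite extensions.
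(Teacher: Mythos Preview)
Your proposal is correct as a proof sketch, but it takes a quite different route from the paper. The paper's own ``proof'' is simply a citation: it refers the reader to \cite[\S III.5]{LangArak} and \cite[Theorems~11.3.6, 11.3.7]{LangFund} and says nothing more. You, by contrast, outline direct arguments for each of (a)--(d) in both the archimedean and non-archimedean settings.

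The key ingredients you use---symmetry of $i_v$, the fact that principal divisors on $\KC$ have zero intersection with fibral divisors, the manipulation $i_v(\Phi_{v,\KC}(D),E_\KC)=-i_v(\Phi_{v,\KC}(D),\Phi_{v,\KC}(E))=i_v(D_\KC,\Phi_{v,\KC}(E))$ for symmetry, and the Stokes argument for archimedean reciprocity---are exactly those that underlie the proofs in Lang's books, so in substance you are reconstructing what the citations point to. What your approach buys is self-containment: a reader need not chase the references. What the paper's approach buys is brevity, which is appropriate since the proposition records standard results attributed to N\'eron, Gross, and Hriljac rather than new content. Your candid acknowledgment that the non-archimedean computation in (c) requires careful bookkeeping of ramification indices, residue degrees, and the normalization of $|\cdot|_v$ is well placed; likewise, the passage in (d) from a regular model over $\O_k$ to one over $\O_{k'}$ (since base change need not preserve regularity) is where the phrase ``independence of the symbol from the choice of model'' is doing real work and would need to be unpacked in a fully detailed proof.
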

\begin{proof}
	See \cite[\S III.5]{LangArak} and \cite[Theorems~11.3.6, 11.3.7]{LangFund}.

\end{proof}
\section{N\'eron symbols and canonical heights}\strut\\[1mm]\label{Falt-Hril}
In this section we let $k$ denote a global field with ring of integers $\O_k$. We assume
that $C$ is given by $\O_k$-integral equations. Let $M_k=M^0_k\cup M^\infty_k$ denote the
set of places of $k$, with absolute values $|\cdot|_v$ normalized to satisfy the product formula. 
Here $M^0_k$ (respectively $M^\infty_k$) denotes the set of non-archimedean (respectively archimedean)
places.  
For each place $v\in M_k$ we let $k_v$ denote the completion of $k$ at $v$. 
If $v\in M^0_k$, we let $\O_v$ be the ring of integers at $v$.




Let $A$ denote the Jacobian variety of $C$ and let $K$ denote its Kummer variety $A/\{\pm 1\}$. 
Let $K\into\BP^{2^g-1}$ be an embedding of $K$ and let
\[\kappa:A\surj K\into\BP^{2^g-1}.\]
\begin{defn}\cite{quasi},\cite{Gross},\cite{FlynnSmart}
The {\em canonical height (or N\'eron-Tate height) on $A$} is the function defined by
\[
\hat{h}(P):=\lim_{n\to\infty}4^{-n}h(\kappa(2^nP)),
\]
where $h$ is the usual absolute height on $\BP^{2^g-1}$. The {\em canonical height pairing (or N\'eron-Tate height pairing) on $A$} is defined by
\[
(P,Q)_{\mathrm{NT}}:=\frac12(\hat{h}(P+Q)-\hat{h}(P)-\hat{h}(Q)).
\]
\end{defn}
Note that taking the absolute height on $\BP^{2^g-1}$ means that $\hat{h}$ does not depend
on $k$.

Next we shall relate the canonical height to N\'eron symbols.
If $D\in \Div(C)(k)$ and $v\in M_k$, then we define $D_v:=D\otimes_k k_v$.
For $D,E\in\Div^0(C)(k)$ with disjoint support and $v\in  M_k$ we define $\langle
D,E\rangle_v:=\langle D_v,E_v\rangle_v$ and the {\em global N\'eron symbol} by
\[
\langle D,E\rangle := \sum_{v\in M_k} \langle D,E\rangle_v.
\]
This is a finite sum, since over all places of good reduction the Zariski
closure $\overline{C}^v$ of the given equations of $C\times_k k_v$ over $\Spec(\O_v)$ is a proper regular model over
$\Spec(\O_v)$. 
Hence we have $\Phi_{v,\overline{C}^v}(D_v)=0$ for all such $v$ and
$i_v(D_{v,\overline{C}^v},E_{v,\overline{C}^v})\ne0$ for only finitely many such $v$.

By Proposition \ref{locneronprops}(c) and the product formula, $\langle D,E\rangle$ only depends
on the classes $[D], [E]\in \Pic^0(C)$ and hence we can drop the assumption that $D$ and
$E$ have disjoint support.

\begin{thm}\label{fh}(Faltings \cite{Faltings}, Hriljac \cite{Hriljac})
	Suppose $C$ is a smooth projective geometrically connected curve of positive genus $g$ defined over a global field $k$.
	If $D,E \in \Div^0(C)(k)$, then we have 
	\[\langle D, E\rangle =  -([D],[E])_{\mathrm{NT}}.\] 
	\end{thm}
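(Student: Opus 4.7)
The plan is to show that the quadratic form $Q(P) := -\langle D,D\rangle$, where $P = [D] \in \Pic^0(C)(k) \cong A(k)$, coincides with $\hat h$ on $A(k)$ by verifying the hypotheses of Tate's uniqueness theorem for canonical heights; polarization then upgrades this to the bilinear identity $-\langle D,E\rangle = ([D],[E])_{\mathrm{NT}}$. Bilinearity and symmetry of $\langle\cdot,\cdot\rangle$ at each place are Proposition \ref{locneronprops}(a)--(b), and the well-definedness of $\langle D,E\rangle$ on classes (without the disjoint-support hypothesis) was already established just before the statement using Proposition \ref{locneronprops}(c) and the product formula; consequently $Q$ is a well-defined even quadratic form on $A(k)$.

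By Tate's theorem, $\hat h$ is the unique quadratic function on $A(k)$ that differs from $h\circ\kappa$ by a bounded function, so it suffices to show $Q - h\circ\kappa = O(1)$ on $A(k)$. I would prove this place by place. For each $v \in M_k$ I would identify $-\langle D,D\rangle_v$, viewed as a function of $P = [D]$ on $A(k_v)$, with a local height attached to the symmetric ample line bundle $L = \kappa^*\O(1)$ on $A$, up to a function bounded uniformly in $P$. Non-archimedeanly this reduces to an intersection-theoretic calculation on a regular model $\KC$ of $C$ over $\Spec(\O_v)$ combined with the Hodge index theorem on arithmetic surfaces: the fibral correction $\Phi_{v,\KC}(D)$ is designed to make $-\langle\cdot,\cdot\rangle_v$ orthogonal to vertical classes, which matches the universal property of the N\'eron model of $A$ and forces agreement with the local N\'eron-Tate pairing. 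Archimedeanly it requires expressing the almost-Green's function of Proposition \ref{Green} in terms of the Riemann theta function on $\C^g/\Lambda$ via the Riemann vanishing theorem, so that $-\langle D,D\rangle_v$ becomes a logarithmic theta-norm exhibiting the archimedean contribution to the Weil height of $L$. Summing over $v$ and invoking the product formula yields $Q - h\circ\kappa = O(1)$, and Tate's theorem then gives $Q = \hat h$.

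The main obstacle is the archimedean identification: producing, from the abstract notion of an almost-Green's function provided by Proposition \ref{Green}, the explicit link with theta functions on the Jacobian. This requires fixing a specific volume form $d\mu$ (the Arakelov $(1,1)$-form associated to a symplectic basis of holomorphic differentials on $C(\C)$) so that the resulting almost-Green's function is the canonical Arakelov Green's function, together with a careful application of the Riemann vanishing theorem to match $-\langle D,D\rangle_v$ with the local factor of the Weil height of $L$. The non-archimedean step, while needing the Hodge index theorem for arithmetic surfaces, is more transparent, because the orthogonality condition defining $\Phi_{v,\KC}(D)$ already encodes the correct local correspondence with the N\'eron model of $A$.
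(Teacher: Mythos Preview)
The paper itself gives no proof of this theorem; it simply attributes the result to Faltings and Hriljac and moves on. So there is no ``paper's proof'' to match. Your sketch is in the right spirit, but there is a genuine gap in the place-by-place step that you should be aware of, and it is exactly the point where Hriljac's argument (as presented in Lang, \emph{Introduction to Arakelov theory}, Chapter~III) differs from what you wrote.

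The problem is that $\langle D,E\rangle_v$ is only defined for $D,E$ with disjoint support and is \emph{not} an invariant of the classes $[D],[E]$ locally; only the global sum is, thanks to Proposition~\ref{locneronprops}(c) and the product formula. Hence ``$-\langle D,D\rangle_v$, viewed as a function of $P=[D]$ on $A(k_v)$'' is not well defined, and comparing it to a local contribution of $h\circ\kappa$ up to $O(1)$ does not make sense as stated: changing the representative $D$ within its class can change an individual $\langle D,E\rangle_v$ by an arbitrarily large amount (compensated at other places). So Tate's uniqueness cannot be invoked via a place-by-place bounded comparison in the form you describe.

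Hriljac's route avoids this by working with N\'eron's axiomatic local heights on $A$ rather than with Tate's limit. One fixes $E\in\Div^0(C)(k)$, lets $E'$ be the corresponding translate of the theta divisor on $A$, and shows for each $v$ that the function $P\mapsto \langle D,E\rangle_v$ (with $[D]=P$, $D$ chosen to avoid $\supp(E)$) is a N\'eron function $\lambda_{E',v}$ on $A$ relative to $E'$. The archimedean instance of exactly this statement appears later in the paper as Theorem~\ref{ArakHril}; the non-archimedean instance is the content of Hriljac's comparison with the N\'eron model. One then invokes N\'eron's global theorem that $\sum_v \lambda_{E',v}$ equals the canonical height pairing against $[E']$, which is $-2([D],[E])_{\mathrm{NT}}$ for the theta polarization. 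This framework tolerates the dependence on the actual divisor (not just its class) because N\'eron functions are defined relative to a divisor on $A$, with the ambiguity absorbed into an additive constant that cancels in the bilinear pairing. Your archimedean discussion is essentially this step; the repair is to replace ``bounded difference from $h\circ\kappa$ at each place'' by ``equals a N\'eron function for a translate of $\Theta$ at each place'', and to conclude via N\'eron's decomposition rather than Tate's limit.
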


The practical importance of this result lies in the fact that we can, at least in principle, compute the canonical height on the Jacobian using data associated to the curve. 
We do not impose any further conditions on $C$ (yet). Suppose that we are given a point $P\in A(k)$ and we want to compute its canonical height $\hat{h}(P)$. In order to use Theorem \ref{fh} for this purpose, we proceed as follows:
\begin{itemize}
 \item[(1)] Find divisors $D,E\in\Div^0(C)(k)$ such that $[D]=[E]=P$ and $\supp(D)\cap\supp(E)=\emptyset$. 
 \item[(2)] Determine a finite set $U$ of places $v\in M^0_k$ such that $\{v\in M_k^0:
 \langle D,E\rangle_v \ne0\}\subset U$.
 \item[(3)]	Find a proper regular model $\KC$ of $C\otimes_k k_v$ over $\Spec(\O_v)$ for all $v\in U$.
 \item[(4)] Compute  $i_v(D_{v,\KC},E_{v,\KC})$ for all $v\in U$.
 \item[(5)] Compute $i_v(\Phi_{v,\KC}(D_{v,\KC}),E_{v,\KC})$ for all $v\in U$ of bad
	 reduction. We call this the {\em correction term}.
 \item[(6)] Find an almost-Green's function $g_{E_v}$ and compute $g_{E_v}(D_v)$ for all $v\in M^\infty_k$.
 \item[(7)] Sum up all local N\'eron symbols.
\end{itemize}
We deal with these steps in the following sections. 
\begin{rk}
We shall tacitly assume from now on that step (1) is always possible in principle, that is every $P$ we encounter can be represented using a $k$-rational divisor. According to \cite[Proposition~3.3]{PS} this is guaranteed whenever the curve has a $k_v$-rational divisor of degree~1 for all $v\in M_k$. If we have $P\in A(k)$ which cannot be represented using a $k$-rational divisor, then we have two options:
\begin{itemize}
	\item[1)] Work over a field extension $k'$ of $k$ such that there exists some $D\in\Div^0(C)(k')$ satisfying $[D]=P$.
	\item[2)] Compute a multiple $n P$ such that there exists $D\in\Div^0(C)(k)$ satisfying $[D]=n P$ and use the quadraticity of the canonical height. 
\end{itemize}
The existence of $n$ as in 2) follows from \cite[Proposition 3.2]{PS}; we can take for $n$ the period of $C$ over $k$.
\end{rk}
\section{Computing N\'eron symbols}\strut\\[1mm]\label{compgns}
In this section we shall address the steps needed for the computation of global N\'eron symbols introduced in the previous section. The first two steps are global in nature and can be viewed as preparatory steps for the remaining four sections which are local.

\subsection{Finding suitable divisors of degree zero}\strut\\[1mm]\label{Move1}
The basic reference for large parts of the remainder of this section is \cite{Hess}. If an ideal $I$ is generated by elements $b_1,\ldots,b_n$, then we write $I=(b_1,\ldots,b_n)$.
Let $k$ be an arbitrary field. There are essentially two ways to represent a divisor $D\in\Div(C)(k)$. 
\begin{itemize}
	\item[(a)] As a sum \[
	D=\sum_im_iD_i,
	\]
	where $D_i\in\Div(C)(k)$ is irreducible over $k$ and $m_i\in\Z$ for all $i$. We call this the {\em free representation of $D$}.
	\item[(b)] Assuming $D$ is effective, using a defining ideal 
	\[
		I_D\subset k[C^a],
	\]
	where $C^a$ is an affine chart of $C$ containing $D$. We call this an {\em ideal representation of $D$}. 
\end{itemize}

Since in our intended applications we are allowed (and often even required) to vary divisors in their linear equivalence classes, it is a natural question to ask whether it is possible to find divisors linearly equivalent to a given divisor in a way that facilitates explicit computations.
\begin{lemma}(Hess)\label{RedEx}
For all $D\in\Div(C)(k)$ and effective $A\in\Div(C)(k)$ there exists an effectively computable triple $(\tilde{D},r,a)$, where $\tilde{D}\in\Div(C)(k)$ is effective, $r\in\Z$ and $a\in k(C)$ such that $\deg(\tilde{D})<g+\deg(A)$ and we have
\[
 D=\tilde{D}+rA+\div(a).
\]We call $\tilde{D}$ a {\em reduction} of $D$ along $A$.
If $\deg(A)=1$, then $\tilde{D}$ is the unique effective divisor such that $\dim(\mathcal{L}(\tilde{D}-r'A))=0$ for all $r'\ge1$.
In this case we have $D\sim\tilde{D}+rA$, where $r\in\Z$ is the maximal integer such that $\dim(\mathcal{L}(D-rA))=1$.
\end{lemma}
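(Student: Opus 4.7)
The plan is to construct the triple $(\tilde D, r, a)$ directly via Riemann--Roch, following Hess. As $r$ varies over $\Z$, the degree $\deg(D - rA)$ decreases linearly in $r$, so the Riemann--Roch inequality $\dim\mathcal{L}(E)\ge \deg(E)+1-g$ forces $\mathcal{L}(D-rA)\ne 0$ for $r\ll 0$, while $\mathcal{L}(D-rA)=0$ for $r\gg 0$. Let $r$ be the largest integer with $\mathcal{L}(D-rA)\ne 0$. Since $D-rA\in\Div(C)(k)$, the space $\mathcal{L}(D-rA)$ is a $k$-vector space, so we may choose $0\ne a'\in\mathcal{L}(D-rA)$ defined over $k$. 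Setting $\tilde D := D-rA+\div(a')$ and $a:=1/a'$, the divisor $\tilde D$ is effective by the defining property of $\mathcal{L}$, lies in $\Div(C)(k)$, and satisfies $D=\tilde D+rA+\div(a)$. Maximality of $r$ gives $\mathcal{L}(D-(r+1)A)=0$, and Riemann--Roch then forces $\deg(D-(r+1)A)\le g-1$, whence $\deg\tilde D=\deg D - r\deg A\le g-1+\deg A < g+\deg A$.

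Now assume $\deg A=1$, so that $A=P$ is a single $k$-rational point. I would first verify the vanishing property of $\tilde D$: the standard isomorphism $\mathcal{L}(E)\cong\mathcal{L}(E+\div(h))$, $g\mapsto g/h$, gives
\[\dim\mathcal{L}(\tilde D-r'A)=\dim\mathcal{L}(D-(r+r')A+\div(a'))=\dim\mathcal{L}(D-(r+r')A)=0\]
for all $r'\ge 1$, by maximality of $r$. The characterization of $r$ as the largest integer with $\dim\mathcal{L}(D-rA)=1$ then follows from the fact that $\dim\mathcal{L}(D-rA)$ drops by at most $1$ as $r$ increases by $1$ (since $A$ is a single point), so just before the dimension reaches $0$ it must equal $1$.

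The delicate step is uniqueness. Suppose $\tilde D_1,\tilde D_2$ are two effective divisors, both satisfying the vanishing condition and arising from reductions with integers $r_1,r_2\in\Z$. I would first rule out $r_1\ne r_2$: assuming $r_2>r_1$, the divisor $\tilde D_2$ is effective and linearly equivalent to $\tilde D_1-(r_2-r_1)A$, giving a nonzero element of $\mathcal{L}(\tilde D_1-(r_2-r_1)A)$ with $r_2-r_1\ge 1$, contradicting the vanishing hypothesis on $\tilde D_1$. With $r_1=r_2$, write $\tilde D_1=\tilde D_2+\div(f)$, so $f\in\mathcal{L}(\tilde D_2)$. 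If $f$ were nonconstant then $\dim\mathcal{L}(\tilde D_2)\ge 2$; but the inclusion $\mathcal{L}(\tilde D_2-A)\subset\mathcal{L}(\tilde D_2)$ has codimension at most $1$, giving $\dim\mathcal{L}(\tilde D_2-A)\ge 1$ and contradicting the vanishing hypothesis on $\tilde D_2$. Effective computability of the whole triple reduces to that of Riemann--Roch spaces and principal divisors, which is precisely the content of Hess's algorithms.
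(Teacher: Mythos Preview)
Your argument is correct and is the standard Riemann--Roch construction underlying Hess's reduction; the paper itself gives no proof here, merely citing \cite[\S8]{Hess}. The only implicit hypothesis worth flagging is $\deg A\ge 1$ (needed for a maximal $r$ to exist), and your reading of the uniqueness clause---as uniqueness among effective $\tilde D$ with $\tilde D\sim D-rA$ for some $r\in\Z$---is the intended one.
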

\begin{proof}
See \cite[\S8]{Hess}. 
\end{proof}

Now assume that $k$ is a global field, that we are given some divisor $D\in \Div^0(C)(k)$ and we want to find $E\sim D$ such that $E$ and $D$ have disjoint support. In other words, we are looking for an effective version of the moving lemma. However, we would like to keep the computations as simple as possible and this means that we would like to work with divisors that are reduced along some effective divisor of small degree whenever possible. 

This leads to the following method:
\begin{itemize}
 \item[1.] Pick two effective divisors $A,A'\in\Div(C)(k)$ with disjoint support.
 \item[2.] Compute multiples $nD$, where $n=1,-1,2,-2,\ldots$ and reduce them along $A$ and $A'$ until we find some $n$ and $n'$ such that the reduction $\tilde{D}_n$ of $nD$ along $A$ and the reduction $\tilde{D}_{n'}$ of $n'D$ along $A'$ have disjoint support.
 \item[3.] Let $r_n,r_{n'}\in\Z$ such that $nD\sim\tilde{D}_n+r_nA$ and $n'D\sim\tilde{D'}_{n'}+r_{n'}A'$. Compute
 \begin{align*}
 \langle D,D\rangle&=\frac{1}{nn'}\langle \tilde{D}_n+r_nA,\tilde{D'}_{n'}+r_{n'}A' \rangle	\\
 &= \frac{1}{nn'}\langle\tilde{D}_n,\tilde{D'}_{n'} \rangle+\frac{r_n}{nn'}\langle A, \tilde{D'}_{n'} \rangle+\frac{r_{n'}}{nn'}\langle \tilde{D}_n, A'\rangle+\frac{r_nr_{n'}}{nn'}\langle A,A' \rangle.
 \end{align*}
\end{itemize}
In practice integers $n,n'$ of fairly small absolute value usually suffice. 
\begin{rk}
In the method above, it is not obvious how to pick $A$ and
$A'$ in a way that facilitates explicit computations.  
If we have $k$-rational divisors $A,A'$ of degree~1 on $C$
then they can be used. 
If $C$ is a plane curve, then we can use the zero or pole divisors of functions of the
form $x-\zeta$, where $\zeta\in k$. 
See Section \ref{Move1h} for the case of hyperelliptic curves.
In general the choice of $A$ and of $A'$ depends on the specific situation.
\end{rk}

\subsection{Determining relevant non-archimedean places}\strut\\[1mm]\label{factoring}
We continue to let $k$ denote a global field. Given two divisors $D$ and $E$ with disjoint
support, we have to find a finite set of places $v\in M^0_k$ such that any non-archimedean
$v$ satisfying $\langle D,E\rangle_v\ne0$ must lie in $U$.

We can assume that $D$ and $E$ are effective and use their respective ideal representations. The idea is to cover our curve by affine patches $C^1,\ldots,C^s$ and determine the relevant places for each patch using Gr\"obner bases. We refer to \cite[Chapter~4]{ALGB} for an introduction to the theory and applications of Gr\"obner bases for polynomial rings over Euclidean rings. 

So let 
\[C^i=\Spec k[x_1,\ldots,x_n]/(G_{i,1}(x_1,\ldots,x_n),\ldots,G_{i,m_i}(x_1,\ldots,x_n))
\]
be such an affine patch, where $G_{i,j}(x_1,\ldots,x_n)\in\O_k[x_1,\ldots,x_n]$ for all $j$. 
From now on we will assume that for each $v\in M^0_k$ there is some $i,j$ such that
$G_{i,j}$ is a $v$-adic unit. 
Note that this implies that the Zariski closure $\overline{C}^v$ of $C\times_k k_v$ over
$\Spec(\O_v)$ is a model for $C$ over $\Spec(\O_v)$.

Suppose for now that the ring of integers $\O_k$ is Euclidean and that $D$ and $E$ are represented by ideals $I_{D,i}$ and $I_{E,i}$, respectively, on $C^i$ for each $i$. 
In fact we can assume that $I_{D,i}$ and $I_{E,i}$ are given by bases whose elements are in $\O_k[x_1,\ldots,x_n]$. If we compute a Gr\"obner basis $B_i$ of
\[
 I_{D,E,i}:=(G_{i,1}(x_1,\ldots,x_n),\ldots,G_{i,m_i}(x_1,\ldots,x_n))+I_{D,i}+I_{E,i}
\] 
over $\O_k$, then $B_i$ contains a unique element $q_{D,E,i}\in \O_k$. 

We define the set $U$ by
\[
	U:=\{v\in M^0_k: \ord_v(q_{D,E,i})>0\textrm{ for some }i\}.
\]
For the proof of the following Lemma, we need the notion and existence of a
desingularization in the strong sense of $\overline{C}^v$ for each $v$. 
The necessary details are presented in Section \ref{RegMods} below.
\begin{lemma}
Any non-archimedean place $v$ such that $\langle D,E\rangle_v>0$ is contained in $U$.	
\end{lemma}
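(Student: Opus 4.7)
The plan is to show that $v \notin U$ implies $\langle D, E\rangle_v = 0$, so in particular it cannot be strictly positive. I begin by unpacking the Gr\"obner basis hypothesis. By the standard elimination property of Gr\"obner bases over the Euclidean ring $\mathcal{O}_k$, the unique element $q_{D,E,i} \in \mathcal{O}_k$ appearing in $B_i$ generates the contraction $I_{D,E,i} \cap \mathcal{O}_k$. The condition $\ord_v(q_{D,E,i}) = 0$ for every $i$ therefore says that the image of $I_{D,E,i}$ in $\mathfrak{k}(v)[x_1, \ldots, x_n]$ is the unit ideal; geometrically, the scheme-theoretic intersection $D_{\overline{C}^v} \cap E_{\overline{C}^v}$ is empty on the special fiber restricted to patch $i$. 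Because the patches cover $\overline{C}^v$ at $v$ (using the running assumption that some $G_{i,j}$ is a $v$-adic unit), the closures $D_{\overline{C}^v}$ and $E_{\overline{C}^v}$ are in fact disjoint on all of $\overline{C}^v$.

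Next I would pass to a regular model via a desingularization $\pi \colon \KC \to \overline{C}^v$ in the strong sense, as developed in Section \ref{RegMods}. The Zariski closures $D_\KC$ and $E_\KC$ are horizontal divisors contained in $\pi^{-1}(D_{\overline{C}^v})$ and $\pi^{-1}(E_{\overline{C}^v})$ respectively, and since the preimages of disjoint subsets remain disjoint, one obtains $D_\KC \cap E_\KC = \emptyset$ and hence $i_v(D_\KC, E_\KC) = 0$.

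The main obstacle is the correction-term contribution $i_v(\Phi_{v,\KC}(D), E_\KC)$. The crucial extra observation is that the disjointness of $D_{\overline{C}^v}$ and $E_{\overline{C}^v}$ on $\overline{C}^v$ forces both $D_\KC$ and $E_\KC$ to avoid the exceptional locus of $\pi$, so each of them meets $\KC_v$ only at smooth points lying on proper transforms of components of $\overline{C}^v_v$. Combining this with the defining property of $\Phi_{v,\KC}(D)$ (orthogonality to every fibral divisor) and the degree-zero condition on $D$, a careful analysis of the intersection matrix of $\KC_v$ should yield that the fibral $\Q$-divisor $\Phi_{v,\KC}(D)$ has vanishing intersection with $E_\KC$, completing the proof. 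I expect this intersection-theoretic bookkeeping — tracking how the points of $D$ and $E$ specialize onto the components of $\KC_v$ and checking the resulting coefficients of $\Phi_{v,\KC}(D)$ against those components actually hit by $E_\KC$ — to be the most delicate step of the argument.
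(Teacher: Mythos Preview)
Your first two steps are fine: if $v\notin U$ then the reductions of $D$ and $E$ are disjoint on $\overline{C}^v$, hence $D_\KC$ and $E_\KC$ are disjoint on any desingularization $\KC\to\overline{C}^v$ in the strong sense, giving $i_v(D_\KC,E_\KC)=0$. This is the contrapositive of what the paper does for its case~\eqref{dgtf}.

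The gap is in your treatment of the correction term. The claim that disjointness of $D_{\overline{C}^v}$ and $E_{\overline{C}^v}$ on $\overline{C}^v$ ``forces both $D_\KC$ and $E_\KC$ to avoid the exceptional locus'' is false: nothing prevents $D$ from specializing to one singular point of $\overline{C}^v$ while $E$ specializes to a \emph{different} singular point. Their closures on $\overline{C}^v$ are then disjoint, yet both meet exceptional components of $\KC_v$; in that situation $\Phi_{v,\KC}(D)$ is typically nonzero and there is no reason for its intersection with $E_\KC$ to vanish, so the ``careful analysis of the intersection matrix'' you invoke cannot succeed without further input. The paper handles this step differently. Arguing directly rather than by contrapositive, it uses the \emph{symmetry} of the local N\'eron symbol (Proposition~\ref{locneronprops}(b)): if $i_v(D_\KC,E_\KC)\le 0$ then from $\langle D,E\rangle_v=\langle E,D\rangle_v>0$ one gets \emph{both} $i_v(\Phi_{v,\KC}(D),E_\KC)>0$ and $i_v(D_\KC,\Phi_{v,\KC}(E))>0$, so both $\Phi_{v,\KC}(D)$ and $\Phi_{v,\KC}(E)$ are nonzero, and hence both $D_{\overline{C}^v}$ and $E_{\overline{C}^v}$ pass through the singular locus of $\overline{C}^v$. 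This symmetry argument is the idea missing from your proposal.
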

\begin{proof}
	Suppose that $\langle D,E\rangle_v>0$ and let $\xi:\KC\to\overline{C}^v$ denote a desingularization of
	$\overline{C}^v$ in the strong sense.
The existence of $\xi$ is asserted by \ref{DesingExists}, since by assumption
$\overline{C}^v$ is a model of $C\times_k k_v$ over $\Spec(\O_v)$.
Then we must have
\begin{equation}\label{pgtf}
	i_v(\Phi_{v,\KC}(D),E_{v,\KC})>0\textrm{  and  }i_v(D_{v,\KC},\Phi_{v,\KC}(E))>0
\end{equation}
or
\begin{equation}\label{dgtf}
i_v(D_{v,\KC},E_{v,\KC})>0.
\end{equation}

If \eqref{pgtf} holds, then $v$ must be a place of bad reduction such that both
$D_{v,\overline{C}^v}$ and
$E_{v,\overline{C}^v}$ intersect the singular locus of $\overline{C}^v$, since otherwise
either $\Phi_{v,\KC}(D)$ or $\Phi_{v,\KC}(E)$ vanish.

If \eqref{dgtf} holds, the fact that $\xi$ is an isomorphism outside the singular locus
of $\overline{C}^v$ implies that the closures $D_{v,\overline{C}^v}$ and
$E_{v,\overline{C}^v}$ do not have disjoint supports.
But this means that there is a point in the support of $D$ and a point in the support of
$E$ having the same reduction modulo $v$. The claim follows easily from this.
\end{proof}
Hence the problem of determining $U$ comes down to a combination of computing Gr\"obner
bases and factoring.

If $\O_k$ is not a Euclidean ring, then we can still use this Gr\"obner basis approach by writing $k$ as $k'(\alpha)$ for a primitive element $\alpha$ of $k$ over $k'$, where $k'=\Q$ if $k$ is a number field and $k'=\F_p(T)$ if $\Char(k)=p\ne 0$. This trick appears in \cite[Exercise~4.3.1]{ALGB}. We add a new variable $t$ to $\O_{k'}[x_1,\ldots,x_n]$, satisfying the relation
\[
 \phi_{\alpha}(t)=0,
\] 
where $\phi_{\alpha}$ is the minimal polynomial of $\alpha$ over $k'$, and replace any occurrence of $\alpha$ in $I_{D,E,i}$ by $t$. Now we get at most one $q_{D,E,i}(t)\in\O_{k'}[t]\setminus\O_{k'}$ in the Gr\"obner basis of $I_{D,E,i}$, but we might also have some $q'_{D,E,i}\in\O_{k'}$. We factor the principal ideal $(q_{D,E,i}(\alpha))$ in $\O_k$ and, if necessary, the principal ideal $(q'_{D,E,i})$ in $\O_k$ to find the relevant $v\in M^0_k$. 

\subsection{Computing regular models}\strut\\[1mm]\label{RegMods}
In the following three sections we let $k$ denote a non-archimedean local field
with valuation $v$. 
Let $R$ be its discrete valuation ring with spectrum $S=\Spec(R)$, uniformizing element $\pi$
and residue field $\mathfrak{k}$. 
Suppose that $C$ is given by $R$-integral equations and that the Zariski closure
$\overline{C}$ of $C$ over $S$ is a model of $C$ over $S$. 
In practice, it is easy to find such equations for $C$ (for instance by requiring that
there is some unit amongst their coefficients).

\begin{defn}
Let $\KC'$ be a model of $C$ over $S$. A {\em desingularization of $\KC'$ in the strong
sense} is a proper birational morphism $\xi:\KC\to\KC'$ such that $\mathcal{C}$ is
regular and $\xi$ is an isomorphism above every regular point of $\KC'$.
\end{defn}
The proof of the following result can be found in \cite[Corollary~8.3.51]{LiuBook}. 
It extends a proof due to Lipman, recalled in \cite{Artin}, of the same result in the
special case that the given model of $C$ over $S$ is excellent.
\begin{lemma}\label{DesingExists}
Let $\KC'$ be a model of $C$ over $S$. Then there exists a desingularization of $\KC'$ in the strong
sense.
\end{lemma}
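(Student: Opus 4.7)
My plan is to follow Lipman's classical two-dimensional desingularization algorithm (repeated blow-up of the singular locus followed by normalization), and to indicate where the non-excellent version in \cite[Corollary~8.3.51]{LiuBook} departs from the original excellent case recalled in \cite{Artin}.

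First I would reduce to the case that $\KC'$ is already normal. The normalization $\nu : \widetilde{\KC'} \To \KC'$ is a finite birational morphism: $\KC'$ is a two-dimensional integral Noetherian scheme whose generic fiber $C/k$ is smooth and therefore normal, so the non-normal locus of $\KC'$ is a proper closed subscheme, and finiteness of $\nu$ is guaranteed (e.g.\ because $\KC'$ is, in the situations of interest, of finite type over the Japanese ring $R$). Since every regular Noetherian local ring is normal, $\nu$ is an isomorphism above the regular locus of $\KC'$; hence it suffices to construct a desingularization in the strong sense of $\widetilde{\KC'}$ and compose with $\nu$.

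Second, starting from the normal model $\KC_0 := \widetilde{\KC'}$, I would construct a tower
\[
 \cdots \To \KC_{n+1} \To \KC_n \To \cdots \To \KC_0,
\]
where $\KC_{n+1}$ is obtained from $\KC_n$ by blowing up the reduced singular locus of $\KC_n$ and then normalizing the result. Each transition morphism is proper, birational, and an isomorphism above the regular locus of its target, because blowing up has the first property along its center and normalization has it above normal points. The key claim is that this tower stabilizes after finitely many steps at a regular surface $\KC$, in which case the composition $\xi : \KC \To \KC'$ is proper birational and is an isomorphism over every regular point of $\KC'$, as required.

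The hard part---and the main obstacle---is termination. In the excellent case, Lipman attaches to each singular closed point of $\KC_n$ a numerical invariant (involving the multiplicity and the number of infinitely near singular points) and shows that a single step of the algorithm makes it strictly smaller; excellence is used crucially to control the behaviour of these invariants under normalization and $\pi$-adic completion, and in particular to ensure that the normalization of $\KC_n$ is again a scheme of finite type with the expected properties. Liu's generalization removes the excellence hypothesis by approximating $\KC'$ locally near each singular point by an excellent model (its henselization or its completion), applying Lipman's theorem there, and descending the resulting resolution to $\KC'$ by faithfully flat descent. This descent, together with verifying that the locally constructed regular models glue, is where I expect the genuine new work to lie; the remainder of the proof is bookkeeping that the strong-sense property is preserved under composition and under the reduction to the normal case.
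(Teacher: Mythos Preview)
Your proposal is essentially correct and aligns with the paper's approach: the paper does not give an independent argument but simply cites \cite[Corollary~8.3.51]{LiuBook}, noting that it extends Lipman's proof (as in \cite{Artin}) from the excellent case to general Noetherian models. Your sketch of Lipman's normalize--blow-up tower and the identification of termination as the crux is exactly the content behind that citation, and the paper even summarizes the same algorithm in the paragraph immediately following the lemma.

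One small point worth tightening: the paper's definition of a model requires only that $\KC'$ be integral, flat, Noetherian of dimension~2 over $S$, not that it be of finite type. Your justification for finiteness of the normalization (``of finite type over the Japanese ring $R$'') therefore does not cover the lemma in the generality stated, and indeed this is part of why the non-excellent extension in \cite{LiuBook} is needed rather than Lipman's original result. In the paper's actual applications the models \emph{are} of finite type over a complete DVR, hence excellent, so your hedge ``in the situations of interest'' is accurate; but if you want to match the lemma as stated you should defer to Liu's argument for this step rather than invoke the Japanese property directly.
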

Although the theory works for any proper regular model of $C$ over $S$, for our algorithm
we need a desingularization $\KC\to\overline{C}$ in the strong sense, as in the previous section.

Lipman's proof is effective:
The idea is to normalize $\overline{C}$ and then blow up the resulting model along its
(necessarily isolated) irregular points.
Repeating this process yields a desingularization of $\overline{C}$ in the strong sense after
finitely many steps. 
The main problem is that normalizations are more difficult from a computational point of view than blow-ups.

A different effective proof of the existence of a desingularization of $\overline{C}$ in the strong sense is
given by Cossart, Jannsen and Saito in \cite{Jannsen}, although they assume that $\overline{C}$ is excellent.
The advantage of their method is that it only involves blow-ups,
either along isolated irregular points or along irreducible components if the singular locus
of the respective model has positive dimension.


This method has been implemented in {\tt Magma} by Donnelly. 
The data that can be accessed once $\KC$ has been constructed using {\tt Magma} includes the blow-up maps on enough affine patches to cover all intermediate models, the intersection matrix of $\KC_v$ and the multiplicities of the irreducible components.

A subtle point is that in the proof in \cite{Jannsen} the blow-ups have to be
performed in a specific order and this has not yet been included in the implementation,
but it should be possible without too much difficulty. 
In any case, the implementation works well in practice. 
The only essential restrictions at the moment are that the curve has to be planar and that blow-ups along components are not
implemented unless $C$ is defined over $\Q$.

\subsection{Computing non-archimedean intersection multiplicities}\strut\\[1mm]\label{naint}
We keep the notation from the previous section and fix a desingularization
$\xi:\KC\to\overline{C}$ in the strong sense, covered by affine patches 
\[
\KC^i=\Spec R[x_1,\ldots,x_{s_i}]/(H_{i,1}(x_1,\ldots,x_{s_i}),\ldots,H_{i,t_i}(x_1,\ldots,x_{s_i})).
\]
For computational purposes we shall assume for the moment that we have two effective divisors $D$ and $E$ with disjoint support whose closures $D_\KC$ and $E_\KC$ lie entirely in an affine piece $\KC^i$. 

The following lemma is a well-known result from commutative algebra saying that quotients and localizations commute.
\begin{lemma}\label{Matsu}
Let $A$ be a commutative ring with unity and let $T\subset A$ be a multiplicative subset. Let $I\subset A$ be an ideal and let $\bar{T}$ denote the image of $T$ in $A/I$. Then we have
\[
A_T/IA_T\cong (A/IA)_{\bar{T}},
\]
where the subscripts denote localizations.
\end{lemma}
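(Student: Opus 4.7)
The plan is to prove the isomorphism by constructing an explicit surjective ring homomorphism $\varphi : A_T \to (A/I)_{\bar T}$ whose kernel is exactly $IA_T$, and then invoking the first isomorphism theorem. The map will be the obvious candidate, namely $\varphi(a/t) := \bar a/\bar t$, where the bar denotes reduction modulo $I$.

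First I would verify that $\varphi$ is well-defined: if $a/t = a'/t'$ in $A_T$, then there exists $s \in T$ with $s(t'a - ta') = 0$ in $A$, and reducing mod $I$ yields $\bar s(\bar{t'}\bar a - \bar t\,\bar{a'}) = 0$ in $A/I$, so $\bar a/\bar t = \bar{a'}/\bar{t'}$ in $(A/I)_{\bar T}$. That $\varphi$ respects addition and multiplication then follows immediately from the way the ring structures on $A_T$ and on $(A/I)_{\bar T}$ are defined in terms of representatives. Surjectivity is clear since every element of $(A/I)_{\bar T}$ can be written in the form $\bar a/\bar t$ for some $a \in A$, $t \in T$, and this is $\varphi(a/t)$.

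It remains to identify the kernel. The inclusion $IA_T \subseteq \ker\varphi$ is immediate, since $\bar i = 0$ for every $i \in I$. For the converse, suppose $\varphi(a/t) = \bar a/\bar t = 0$ in $(A/I)_{\bar T}$. Then there exists $s \in T$ with $\bar s\bar a = 0$ in $A/I$, i.e., $sa \in I$. The key trick is to rewrite $a/t = (sa)/(st)$ in $A_T$, which is valid by commutativity of $A$; since $sa \in I$, this expression lies in $IA_T$. Hence $\ker\varphi \subseteq IA_T$, and the first isomorphism theorem produces the claimed isomorphism $A_T/IA_T \cong (A/I)_{\bar T}$.

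I do not expect any real obstacle — this is a standard exercise in commutative algebra, and the entire argument rests on the universal compatibility of localization with quotients. The only tiny subtlety is the step $a/t = (sa)/(st)$, which converts the $I$-relation $sa \in I$ coming from vanishing in the localized quotient into an explicit element of $IA_T$.
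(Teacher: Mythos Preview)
Your argument is correct and is the standard textbook proof of this well-known fact. The paper itself does not give a proof at all but simply cites \cite[Theorem~4.2]{Matsum}; your explicit construction via the first isomorphism theorem is presumably what one finds there.
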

\begin{proof}
See  \cite[Theorem~4.2]{Matsum}.
\end{proof}

Let $I_{D,i}$ and $I_{E,i}$ denote defining ideals of $D_\KC$ and $E_\KC$ in the ring $\O_{\KC^i}$, respectively. For the computation of the intersection multiplicity we use the following version of the Chinese remainder theorem for modules. 
\begin{prop}\label{crt}
 Let $A$ be a commutative ring and let $M$ be an Artinian and Noetherian $A$-module. Then there is an isomorphism of $A$-modules
 \[
  M\cong \bigoplus_P M_P,
 \]
 where the sum is over all maximal ideals $P$ of $A$ and $M_P$ denotes the localization of $M$ at $P$.
\end{prop}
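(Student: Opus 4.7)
The plan is to prove this by reducing to a local-global argument, exploiting the fact that the hypotheses force $M$ to have finite length.

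First I would observe that since $M$ is Artinian and Noetherian as an $A$-module, $M$ has finite length. Consequently, $\mathrm{Ass}(M)$ is finite, and every associated prime of $M$ is maximal (a non-maximal associated prime would give rise to an infinite descending chain of submodules). Since $\mathrm{Supp}(M)$ consists of primes containing some associated prime, one has $\mathrm{Supp}(M) = \mathrm{Ass}(M) = \{P_1,\dots,P_r\}$, a finite set of maximal ideals. In particular, $M_Q=0$ for every maximal ideal $Q \notin \{P_1,\dots,P_r\}$, so the purported direct sum is actually the finite sum $\bigoplus_{i=1}^r M_{P_i}$.

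Next I would introduce the natural $A$-linear map
\[
\phi: M \;\longrightarrow\; \bigoplus_{i=1}^r M_{P_i}, \qquad m \;\longmapsto\; (m/1,\dots,m/1),
\]
and prove that $\phi$ is an isomorphism by checking that the induced map $\phi_Q$ is an isomorphism for every maximal ideal $Q$ of $A$ (the local-global principle, valid because both kernel and cokernel commute with localization). If $Q \notin \mathrm{Supp}(M)$ both sides localize to $0$, so the only case to analyze is $Q = P_j$ for some $j$.

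The heart of the argument, and what I expect to be the main obstacle, is the technical claim that $(M_{P_i})_{Q} = 0$ whenever $Q$ is a maximal ideal distinct from $P_i$. To prove it I would use that $M_{P_i}$ has finite length over $A_{P_i}$, hence is annihilated by a power $P_i^{n_i}A_{P_i}$; so as an $A$-module $M_{P_i}$ is annihilated by $P_i^{n_i}$. Since $P_i$ and $Q$ are distinct maximal ideals, $P_i \not\subset Q$, so some $a\in P_i\setminus Q$ exists; then $a^{n_i}$ annihilates $M_{P_i}$ while being invertible in $A_Q$, forcing $(M_{P_i})_Q=0$. With this in hand, localizing $\phi$ at $P_j$ gives $M_{P_j} \to (M_{P_j})_{P_j} \oplus \bigoplus_{i\ne j}(M_{P_i})_{P_j} = M_{P_j}$, which is the identity (using that localization is idempotent, via Lemma \ref{Matsu} or directly). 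This completes the verification and yields the desired isomorphism.
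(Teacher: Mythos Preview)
Your argument is correct: the reduction to finite length, the identification of the support as a finite set of maximal ideals, and the local--global verification of the diagonal map are all sound, with the key vanishing $(M_{P_i})_{Q}=0$ for distinct maximal ideals handled cleanly via the annihilator $P_i^{n_i}$. The paper itself does not give a proof but simply cites \cite[Theorem~2.13]{Eisenbud}; what you have written is essentially the standard proof of that theorem, so there is no substantive divergence to discuss. One minor remark: your reference to Lemma~\ref{Matsu} for the idempotence of localization is slightly off, since that lemma concerns commutation of quotients and localizations, but as you note the fact $(M_{P_j})_{P_j}\cong M_{P_j}$ is immediate anyway.
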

\begin{proof}
See \cite[Theorem~2.13]{Eisenbud}.
\end{proof}

\begin{prop}\label{IntForm}
Suppose that $D_\KC\cap E_\KC$ only intersects a single component $\Gamma$ of $\KC^i_v$.
Let $\O_\Gamma$ denote the ring of regular functions on $\Gamma$. Then we have
 \[
  i_v(D_{\KC},E_{\KC})=\len_{\O_\Gamma}\left(\O_\Gamma/(I_{D,i}+I_{E,i})\O_\Gamma\right)
 \]
\end{prop}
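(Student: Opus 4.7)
My plan is to apply Proposition \ref{crt} (the Chinese Remainder Theorem) together with Lemma \ref{Matsu} (commutativity of quotient and localization) to decompose the module $M := \O_\Gamma/(I_{D,i}+I_{E,i})\O_\Gamma$ as a direct sum of local contributions at the closed points of $D_\KC \cap E_\KC$, and then to match each contribution with the corresponding term in the sum defining $i_v(D_\KC,E_\KC)$ in Definition \ref{AsInt}.

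First I would observe that, since $D$ and $E$ are effective divisors on $C$ with disjoint support, the scheme-theoretic intersection $D_\KC \cap E_\KC$ is zero-dimensional; by hypothesis its (finitely many) closed points $P_1,\ldots,P_r$ all lie on the single component $\Gamma$. Consequently $M$ is a finitely generated $\O_\Gamma$-module whose support is contained in $\{P_1,\ldots,P_r\}$, hence both Noetherian and Artinian. Proposition \ref{crt} then gives
\[
M \;\cong\; \bigoplus_{j=1}^{r} M_{\mathfrak{m}_{P_j}},
\]
and Lemma \ref{Matsu} identifies each summand with $(\O_\Gamma)_{\mathfrak{m}_{P_j}}/(I_{D,i}+I_{E,i})(\O_\Gamma)_{\mathfrak{m}_{P_j}}$. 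Additivity of length then yields
\[
\len_{\O_\Gamma}(M) \;=\; \sum_{j=1}^{r} \len_{(\O_\Gamma)_{\mathfrak{m}_{P_j}}}\!\bigl((\O_\Gamma)_{\mathfrak{m}_{P_j}}/(I_{D,i}+I_{E,i})(\O_\Gamma)_{\mathfrak{m}_{P_j}}\bigr).
\]

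The main step—and where I expect the real work to lie—is to identify each local term on the right with $i_{P_j}(D_\KC,E_\KC)\cdot[\mathfrak{k}(P_j):\mathfrak{k}]$. Since $\KC$ is regular at $P_j$, the ideal $\mathfrak{p}_\Gamma\O_{\KC,P_j}$ is principal, generated by a local equation $\gamma$ of $\Gamma$ at $P_j$, so that $(\O_\Gamma)_{\mathfrak{m}_{P_j}} \cong \O_{\KC,P_j}/(\gamma)$. The hypothesis that $D_\KC \cap E_\KC$ only meets $\Gamma$ is what permits a safe passage between the quotient of $\O_{\KC,P_j}$ by $I_{D,P_j}+I_{E,P_j}$ and the further quotient by $\gamma$, while the residue field factor $[\mathfrak{k}(P_j):\mathfrak{k}]$ enters when one compares the length over the local ring $(\O_\Gamma)_{\mathfrak{m}_{P_j}}$ with the intrinsic $\mathfrak{k}$-linear structure. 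Once this local identification is in place, summing over $j$ produces
\[
\len_{\O_\Gamma}(M) \;=\; \sum_{j=1}^{r} i_{P_j}(D_\KC,E_\KC)\,[\mathfrak{k}(P_j):\mathfrak{k}] \;=\; i_v(D_\KC,E_\KC),
\]
which is the desired equality.
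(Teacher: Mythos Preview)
Your strategy is the same as the paper's: decompose $\O_\Gamma/(I_{D,i}+I_{E,i})$ via Proposition~\ref{crt}, identify each local summand with the contribution at a closed point $P$ of $\Gamma$, and conclude by additivity of length. The paper's execution of the local step is, however, more direct than yours. Instead of passing through $(\O_\Gamma)_{\mathfrak{m}_{P_j}}$ and a local equation $\gamma$ of $\Gamma$, the paper writes the summand at $P$ as $\O_{\KC^i,P}/(I_{D,i}+I_{E,i})$ outright (this is where your ``safe passage'' is silently absorbed) and then extracts the residue-degree factor from the single length identity
\[
\len_{\O_\Gamma}(M)\;=\;\len_{\O_{\KC^i,P}}(M)\,[\mathfrak{k}(P):\mathfrak{k}],
\]
stated for a finite-length module $M$ carrying compatible $\O_\Gamma$- and $\O_{\KC^i,P}$-structures. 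Your account of where $[\mathfrak{k}(P_j):\mathfrak{k}]$ enters is the weakest point of the proposal: once you have reduced to lengths over the local rings $(\O_\Gamma)_{\mathfrak{m}_{P_j}}$ and $\O_{\KC,P_j}$, both of which have residue field $\mathfrak{k}(P_j)$, no comparison between those two local lengths can produce that factor. In the paper the factor arises precisely because one side is a length over $\O_\Gamma$ itself, not over its localization at $P$.
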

\begin{proof}
From Proposition \ref{crt} we get an isomorphism of $\O_\Gamma$-modules
\begin{equation}\label{ModsIsom}
 \O_\Gamma/(I_{D,i}+I_{E,i})\cong \bigoplus_P\O_{\KC^{i},P}/(I_{D,i}+I_{E,i}),
\end{equation}
where the sum is over all maximal ideals of $\O_\Gamma$, that is, over all closed points $P\in\Gamma$.
 By our assumptions we have
 \begin{align*}
  i_v(D_{\KC},E_{\KC})=&\sum_Pi_P(D_{\KC},E_{\KC})[\mathfrak{k}(P):\mathfrak{k}]\\
  						 =&\sum_P\len_{\O_{\KC^{i},P}}\left(\O_{\KC^{i},P}/(I_{D,i}+I_{E,i})\right)[\mathfrak{k}(P):\mathfrak{k}]\\
  						 =&\sum_P\len_{\O_\Gamma}\left(\O_{\KC^{i},P}/(I_{D,i}+I_{E,i})\right)\\
  						 =&\len_{\O_\Gamma}\bigoplus_P\left(\O_{\KC^{i},P}/(I_{D,i}+I_{E,i})\right)\\
  						 =&\len_{\O_\Gamma}\left(\O_\Gamma/(I_{D,i}+I_{E,i})\right)
 \end{align*}
 using \eqref{ModsIsom}, additivity of the length and the fact that if $M$ is an $\O_\Gamma$-module that is also an $\O_{\KC^{i},P}$-module for some closed point $P\in\Gamma$, then we have
 \[
  \len_{\O_\Gamma}(M)=\len_{\O_{\KC^{i},P}}(M)[\mathfrak{k}(P):\mathfrak{k}].
 \]
\end{proof}
Instead of computing $\len_{\O_\Gamma}\left(\O_\Gamma/(I_{D,i}+I_{E,i})\O_\Gamma\right)$ for each component $\Gamma$ of $\KC^i_v$, we can proceed more directly. Let
\begin{equation}\label{ADEv}
 A_{D,E,i,v}:=\left(R[x_1,\ldots,x_{s_i}]/I_{D,E,i,v}\right)_{(\pi)}
\end{equation}
where
\begin{equation}\label{IDEv}
 I_{D,E,i,v}=(H_1(x_1,\ldots,x_{s_i})\ldots,H_{t_i}(x_1,\ldots,x_{s_i}))+I_{D,i}+I_{E,i}.
\end{equation}
\begin{cor}
We have
\[
 i_v(D_{\KC},E_{\KC})=\len_{\O_{\KC^{i}_v}}A_{D,E,i,v}
\]
\end{cor}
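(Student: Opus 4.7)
The plan is to mimic the proof of Proposition \ref{IntForm}, but to work on the whole affine patch $\KC^i$ rather than on a single component of its special fiber, using that the scheme-theoretic intersection is already zero-dimensional and supported on $\KC^i_v$.

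First, I would observe that $\Spec A_{D,E,i,v}$ represents the scheme-theoretic intersection $D_\KC\cap E_\KC$ restricted to $\KC^i$: the ideal $I_{D,E,i,v}$ encodes the equations of $\KC^i$ together with those of $D_\KC$ and $E_\KC$. Because $D$ and $E$ have disjoint support on the generic fiber, the quotient $R[x_1,\ldots,x_{s_i}]/I_{D,E,i,v}$ is $\pi$-power torsion; because $D_\KC$ and $E_\KC$ share no component on the regular surface $\KC^i$, it is zero-dimensional. Hence $A_{D,E,i,v}$ is Artinian, and the localization at $(\pi)$ just reflects that we are isolating the part supported above $v$ (which here is everything, since the generic fiber is already empty).

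Next, I would apply Proposition \ref{crt} to obtain a decomposition
\[
 A_{D,E,i,v} \;\cong\; \bigoplus_{P} \O_{\KC^i,P}\big/(I_{D,i}+I_{E,i})\O_{\KC^i,P},
\]
where the direct sum is over the closed points $P\in\KC^i_v$ lying in $\supp(D_\KC\cap E_\KC)$; Lemma \ref{Matsu} identifies each local summand with the stalk quotient on the right. Then, using the standard change-of-ring identity $\len_{\O_{\KC^i_v}}(M)=\len_{\O_{\KC^i,P}}(M)[\mathfrak{k}(P):\mathfrak{k}]$ for a module $M$ supported at the single closed point $P$, together with additivity of length, I obtain
\[
 \len_{\O_{\KC^i_v}}(A_{D,E,i,v}) \;=\; \sum_P \len_{\O_{\KC^i,P}}\bigl(\O_{\KC^i,P}/(I_{D,i}+I_{E,i})\O_{\KC^i,P}\bigr)[\mathfrak{k}(P):\mathfrak{k}].
\]
By Definition \ref{AsInt} the right-hand side equals $i_v(D_\KC,E_\KC)$, which is the claim.

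The only delicate point is making precise the role of the localization at $(\pi)$ in the definition of $A_{D,E,i,v}$; once one recognises that the intersection scheme is already $\pi$-power torsion and zero-dimensional, the localization contributes essentially nothing and the argument collapses to the same decomposition-plus-change-of-ring calculation that proves Proposition \ref{IntForm}. I therefore expect no serious obstacle beyond careful bookkeeping.
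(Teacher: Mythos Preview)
Your argument is correct and is essentially what the paper's one-line proof ``Use Proposition~\ref{IntForm} and additivity of the length'' is pointing to: you unwind the proof of Proposition~\ref{IntForm} directly over the whole special fiber $\KC^i_v$ rather than invoking it componentwise, using the same ingredients (Proposition~\ref{crt}, Lemma~\ref{Matsu}, and the change-of-ring identity for lengths). The only cosmetic difference is that the paper factors through the single-component statement, whereas you apply the Chinese remainder decomposition in one step; the mathematical content is identical.
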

\begin{proof}
Use Proposition \ref{IntForm} and additivity of the length.
\end{proof}

Computing $\len_{\O_{\KC^{i}_v}}A_{D,E,i,v}$ is rather easy and can be done, for instance, in {\tt Magma}. 
The crucial step is the computation of a Gr\"obner basis $B$ of $I_{D,E,i,v}$ over the Euclidean ring $R$.
Here the property of $B$ that we need is that for every $h\in R[x_1,\ldots,x_{s_i}]$ multivariate division
of $h$ by $B$ yields a unique remainder that we call $h\bmod B$.

The idea is to count residue classes as follows, where we start with $d=0$.

Suppose $d\ge 0$. For each monomial $g$ of total degree $d=\deg(g)$, find the integer $n$ such that
$\pi^jg$ is new for $j\in\{0,\ldots,n-1\}$,
where we call $\pi^jg$ {\em new} if the residue classes of $\pi^j g$ has not already been
counted.
We can test the latter by checking whether the total degree of $\pi^jh\bmod B$ is at most
equal to the total degree of $h$ or whether $\pi^j h$ does not divide $h$.

If we find that no monomial of total degree $d$ contributes a new residue
class, then we are done, otherwise we increment $d$ by~1 and repeat this process.
See Algorithm \ref{alg3}. 

\algsetup{indent=2.0 em}
\begin{algorithm*}
\caption{ Computation of $\len_{\O_{\KC^{i}_v}}A_{D,E,i,v}$}\label{alg3}
\begin{algorithmic}
  \STATE $B=\{g_1(x_1,\ldots,x_{s_i}),\ldots,g_r(x_1,\ldots,x_{s_i}),q\}\leftarrow\text{  Gr\"obner basis  of }I_{D,E,i,v}$ 
  \STATE  $m\leftarrow \ord_v(q)$ \quad // $q$ yields $m$ distinct residue classes.
	\STATE $d\leftarrow 0$ \quad // Total degree
	\STATE $T\leftarrow\emptyset$ \quad // Monomials all of whose multiples are known not to be new
  \REPEAT
\STATE $d\leftarrow d+1$ \quad // Increment total degree
  \STATE $V\leftarrow \{g=\prod^{s_i}_{i=1}x_i^{k_i}: k_i\in\mathbb{N},\; \sum^{s_i}_{i=1}k_i=d\text{  and
	}h\nmid g\textrm{  for all }h \in T\}$ \\ // No
	monomial of total degree $d$ outside $V$ can be new.
	  \STATE $m'\leftarrow m$ 
  \FOR{$g\in V$}
    \STATE $n\leftarrow 0$
	\WHILE{$\deg(\pi^ng\bmod B)> d\text{  or  }g\mid \pi^ng\bmod B$}
	  \STATE $n\leftarrow n+1$ \quad // $\pi^ng$ is new.
	  \ENDWHILE
      \STATE $m\leftarrow n+m$ \quad // Get $n$ new residue classes.
	  \IF{$n=0$}\STATE $T\leftarrow T\cup\{g\}$ \quad // No multiple of $g$ is new. \ENDIF 
	  \ENDFOR
	  \UNTIL{$m=m'$} \quad // No monomial of total degree $d$ is new.
\RETURN $m$
\end{algorithmic}
\end{algorithm*}

In order to apply the results of this section we need to be able to find
\begin{itemize}
\item[(a)] an affine cover $\KC_v^i$ of $\KC$ containing $D_\KC\cap E_\KC$,
\item[(b)] ideal representations $I_{D,i}$ and $I_{E,i}$ of $D_\KC|_{\KC^i_v}$ and $E_\KC|_{\KC^i_v}$.
\end{itemize}
If the Zariski closure $\overline{C}$ of $C$ over $\Spec(R)$ is regular, then we can solve (a) and
(b) by decomposing $D$ and $E$ into prime divisors over $k$. 
If the regular model has a more complicated structure, this may not be sufficient and we may have to decompose $D$
and $E$ into prime divisors over the maximal unramified extension $k^\mathrm{nr}$ 
since each such prime divisor reduces to a single point on the special fiber of both
$\overline{C}$ and $\KC$. 
This might be necessary because our strategy is to start with $R$-integral ideal representations of $D$ and $E$ and recursively lift these through the blow-up process. 

Note that we do not actually have to work over $k^\mathrm{nr}$. 
In order to decompose $D$ into prime divisors over $k^\mathrm{nr}$ it suffices to consider
the maximal unramified extension $l/k$ contained in the smallest extension $k(D)/k$ such
that $D$ becomes pointwise rational. 
It is possible to compute with such extensions in {\tt Magma}.

\begin{rk}
The strategy employed to decompose $D$ and $E$ depends on the curve at hand; for hyperelliptic curves there is a straightforward method to decompose divisors that only uses factorization of univariate polynomials as explained in Section \ref{hyper}.
\end{rk}

All of the above is trivial if $D$ and $E$ are pointwise $k$-rational: 
\begin{cor}\label{NaiveInt}
Suppose $D=\sum_ln_l (P_l)$ and $E=\sum_jm_j(Q_j)$, where $n_l,m_j\in\Z$ and all $P_l$ and $Q_j$ are $k$-rational such that $D_\KC\cap\KC_v$ and $E_\KC\cap\KC_v$ contain no singular points of $\KC_v$. Then we have
\[i_v(D_{\KC},E_{\KC})=\sum_{l,j}n_lm_j\min\left\{\ord_v(x_1(P_l)-x_1(Q_j)),\ldots,\ord_v(x_{s_i}(P_l)-x_{s_i}(Q_j))\right\}\]
where $P_l=(x_1(P_l),\ldots,x_{s_i}(P_l)),\;Q_j=(x_1(Q_j),\ldots,x_{s_i}(Q_j))\in C^i$.
\end{cor}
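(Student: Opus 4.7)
My plan is to reduce to the case of a single pair of $k$-rational points via bilinearity and then compute the intersection multiplicity directly from the sectional structure of the closures.

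By linearity of $i_v$ in both arguments (Definition \ref{AsInt}), it suffices to show that for any pair of $k$-rational points $P, Q$ whose closures meet $\KC_v$ away from its singular locus,
\[
i_v((P)_\KC, (Q)_\KC) = \min\bigl\{\ord_v(x_1(P)-x_1(Q)), \ldots, \ord_v(x_{s_i}(P)-x_{s_i}(Q))\bigr\}.
\]
A point $P \in C(k)$ extends uniquely to a section $\sigma_P: S \to \KC$ by the valuative criterion applied to the proper $S$-scheme $\KC$, and $(P)_\KC$ is the image of $\sigma_P$. Inside the affine patch $\KC^i = \Spec R[x_1, \ldots, x_{s_i}]/(H_1, \ldots, H_{t_i})$, this section corresponds to the ring surjection $f_P: \O_{\KC^i} \to R$ given by $x_j \mapsto x_j(P)$, whose kernel is generated by the linear forms $x_1 - x_1(P), \ldots, x_{s_i} - x_{s_i}(P)$. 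Writing $I_P$ for this ideal and $I_Q$ for the analogous one for $Q$, I obtain
\[
\O_{\KC^i}/(I_P + I_Q) \;\cong\; R/\bigl(x_1(P)-x_1(Q), \ldots, x_{s_i}(P)-x_{s_i}(Q)\bigr),
\]
since the image of $I_Q$ under $f_P$ is precisely the displayed ideal in $R$.

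Because $R$ is a DVR with uniformizer $\pi$, the right-hand side equals $R/\pi^m R$ with $m = \min_j \ord_v(x_j(P)-x_j(Q))$, and has length $m$ as $R$-module and hence also as $\O_{\KC,z}$-module at any supporting closed point $z$. If $m = 0$ then some coordinate differs modulo $\pi$, so the two sections miss each other on $\KC_v$ and both sides of the formula vanish. Otherwise the sections meet at a single closed point $z$, which is $\mathfrak{k}$-rational since $P$ and $Q$ are, so $[\mathfrak{k}(z):\mathfrak{k}] = 1$, and Definition \ref{AsInt} yields $i_v((P)_\KC, (Q)_\KC) = m$, as required.

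The one delicate point is verifying that $I_P$ really cuts out the closure of the section inside $\KC^i$, and not some larger subscheme arising from the embedding into affine $s_i$-space. I expect this to be the main obstacle but it is essentially automatic: the quotient $\O_{\KC^i}/I_P \cong R$ is an integral domain of Krull dimension one, which forces $I_P$ to coincide with the height-one prime corresponding to the section $\sigma_P$, and hence with the ideal of the Weil divisor $(P)_\KC|_{\KC^i}$.
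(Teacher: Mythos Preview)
Your argument is correct and is essentially the detailed version of what the paper intends: the paper simply states that the result follows from Proposition~\ref{IntForm}, and your computation of $\O_{\KC^i}/(I_P+I_Q)\cong R/\pi^m R$ with $m=\min_j\ord_v(x_j(P)-x_j(Q))$ is exactly the length calculation that proposition reduces to in the case of two $k$-rational points meeting at a single $\mathfrak{k}$-rational point. Your final paragraph checking that $I_P$ really is the ideal of the section is a useful sanity check not spelled out in the paper.
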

\begin{proof}
This follows easily from Proposition \ref{IntForm}
\end{proof}
See \cite{David} for a similar version of Lemma \ref{NaiveInt}, found independently by
Holmes. 

\begin{rk}\label{NoFac}
A different strategy was brought to the attention of the author by Florian Hess and
consists in computing the intersection multiplicities of $D_\KC$ and $E_\KC$ on all affine
patches $\KC^i$ such that $D_\KC$ and $E_\KC$ intersect on $\KC^i_v$. For simplicity, we
assume that these are $\KC^1$ and $\KC^2$. We then need to subtract from the result all
intersections that take place on both $\KC^1$ and $\KC^2$ which can be expressed as the
length of a certain module. This approach is outlined in \cite{HessStudent}, but it is not
clear how it can be made practical if $\overline{C}\ne\KC$.
\end{rk}

\subsection{Computing the correction term}\strut\\[1mm]\label{CompCorrTerm}
We continue to let $\KC$ denote a desingularization in the strong sense of $\overline{C}$
over $S$.
Suppose that the special fiber $\KC_v$ is equal to $\sum^r_{i=0}n_i\Gamma_v^i$, where
$\Gamma^0_v,\ldots,\Gamma^r_v$ are the irreducible components of $\KC_v$. Let $M_v$ denote the
intersection matrix $\left(i_v(n_i\Gamma^i_v,n_j\Gamma^j_v)\right)_{0\le i,j\le r}$ of
$\KC_v$. 

Suppose we are given a divisor $D\in\Div^0(C)(k)$ and we want to compute a fibral
$\Q$-divisor $\Phi_{v,\KC}(D)=\sum^r_{i=0}\alpha_in_i\Gamma_v^i$ having trivial
intersection with $\Div_v(\KC)$.
Also suppose that we have found both $M_v$ and $s(D)$, where 
\begin{equation}\label{sD}
s(D)=\left(n_0i_v(D_{\KC},\Gamma_v^0),\ldots,n_ri_v(D_{\KC},\Gamma_v^r)\right)^T.\end{equation}
We mention two possible methods here, both easily checked to be correct using \cite[\S
III.3]{LangArak}.
 \begin{itemize}
  \item[(i)] Let $M^+_v$ be the Moore-Penrose pseudoinverse of $M_v$. Then we can set \[
 (\alpha_0,\ldots,\alpha_r)^T:=-M^+_v\cdot s(D).
\]
  \item[(ii)] (Cox-Zucker \cite{cz}) Suppose that there exists some $i$ such that $n_i=1$, say $n_0=1$, and let $M'_v$ be the matrix obtained by deleting the first column and row from $M_v$. We pick $\alpha_0:=0$ and \[
 (\alpha_1,\ldots,\alpha_r)^T:=-M_v'^{-1}\cdot s'(D),
\]
 where $s'(D)$ is the vector obtained by removing the first entry of $s(D)$. 
 \end{itemize}
 
 We can now compute $i_v(\Phi_{v,\KC}(D),E_{\KC})$ easily for $E\in\Div^0(C)(k)$ having support disjoint from $D$. This is simply equal to
\[
s(E)^T\cdot (\alpha_0,\ldots,\alpha_r)^T,
\]
where $s(E)$ is defined as in \eqref{sD}.

It only remains to discuss how $s(D)$ and $s(E)$ can be computed, because $M_v$ can be
computed using {\tt Magma} once $\KC$ has been constructed. 
But this is essentially contained in the previous section:
We decompose $D$ and $E$ into prime divisors
over $k^{\mathrm{nr}}$ and then determine which components the corresponding points map to
by lifting ideal representations recursively through the blow-up process. 
Since for any one of these divisors $P$ all points in its support reduce to the same point in each step, it is easy to pick suitable affine patches covering the intermediate models until we find an affine patch of $\KC$ containing $P_\KC$. 
The final task is the computation of $i_v(P_{\KC},\Gamma^i_v)$ for all components $\Gamma_v^i$ intersecting this affine patch which is easy under our assumptions.

\subsection{Computing archimedean intersection multiplicities}\strut\\[1mm]\label{ArchimInt}
In order to deal with the computation of archimedean local N\'eron symbols it suffices to
consider $k=\C$. Let $C(\C)$ denote the Riemann surface associated to $C$. According to
Section \ref{Falt-Hril} we need to find an almost-Green's function with respect to a
divisor $E\in\Div^0(C)(\C)$. Notice that we can write any such divisor in the form
$E=E_1-E_2$, where $E_1$ and $E_2$ are {\em non-special}, that is they are effective of
degree $g$ and their $\mathcal{L}$-spaces have dimension~1. 
It suffices to determine almost-Green's functions  with respect to non-special divisors and any fixed normalized volume form on $C(\C)$. 

In order to do this it turns out to be useful to work on the analytic Jacobian, which we view as an abelian variety over the complex numbers. Let $\tau$ be an element of the Siegel space $\mathfrak{h}_g$ such that $A(\C)$ is isomorphic to the complex torus $\C^g/\Lambda$, where $\Lambda=\Z^g\oplus\tau\Z^g$. Let the map $j$ be defined by
\[\xymatrix{
 j:\C^g\ar@{->>}[r]&\C^g/\Lambda\ar[r]^{\cong} &A(\C).}
\]
Moreover, we fix an Abel-Jacobi map, that is an embedding $\iota$ of $C(\C)$ into $A(\C)$, and let $\Theta\in\Div(A)$ denote the theta-divisor with respect to $\iota$. Let $S:\Div(C)\To A$ denote the summation map associated to $\iota$. 

On $A(\C)$ we can find the following canonical 2-form: Let $\eta_1,\ldots,\eta_g$ be an orthonormal basis of the differentials of first kind on the Jacobian. Then the canonical 2-form is given by
\[
 \frac{1}{2g}(\eta_1\wedge\bar{\eta}_1+\ldots+\eta_g\wedge\bar{\eta}_g)
\]
and we define the {\em canonical volume form $d\mu$ on $C(\C)$} by pulling this form back using $\iota$, see \cite[\S13.2]{LangFund}. The details are not important for us as the dependence on $d\mu$ for divisors of degree zero. 

For the next theorem, conjectured by Arakelov and proved by Hriljac, we need the concept of N\'eron functions with respect to divisors on $A$, for which we refer to \cite[Chapter~11]{LangFund}. We will introduce a specific N\'eron function in the situation we are interested in shortly. We use the notation $E_P$ to denote the translation of a divisor $E\in\Div(A)$ by a point $P\in A$.
\begin{thm}\label{ArakHril}(Hriljac) Let $E\in \Div^g(C)$ be non-special, let $P=S(E)$ and $E'=([-1]^*(\Theta))_P$. Let $\lambda_{E'}$ be a N\'eron function with respect to $E'$. Then $\lambda_{E'}\circ\iota$ is an almost-Green's function with respect to $E$ and $d\mu$, where $d\mu$ is the canonical volume form on $C(\C)$.
\end{thm}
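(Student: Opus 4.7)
The plan is to check the two defining properties of an almost-Green's function from Proposition~\ref{Green} directly, by transporting the analytic properties of the N\'eron function $\lambda_{E'}$ on $A(\C)$ down to $C(\C)$ along $\iota$.

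The first step is to identify the pullback divisor $\iota^*(E')$. By Riemann's theorem on theta divisors (combined with the choice $E' = ([-1]^*\Theta)_P$, which is precisely what makes the signs match), for any point $P \in A(\C)$ lying outside a certain proper subvariety the divisor $\iota^*(([-1]^*\Theta)_P)$ is the unique effective divisor of degree $g$ whose image under the summation map $S$ equals $P$. The non-specialness hypothesis on $E$ guarantees we are in this regime and that $E$ is the unique effective divisor of degree $g$ in its linear equivalence class; combined with $S(E) = P$, this gives the equality of divisors $\iota^*(E') = E$ on $C(\C)$.

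Next, I would transfer the analytic data. By the definition of a N\'eron function (see \cite[Chapter~11]{LangFund}), $\lambda_{E'}$ is $C^\infty$ on $A(\C)\setminus\supp(E')$ with a logarithmic singularity along $E'$. Since $\iota$ is a closed holomorphic embedding, composition preserves both properties, so $\lambda_{E'}\circ\iota$ is $C^\infty$ on $C(\C)\setminus\iota^{-1}(\supp(E')) = C(\C)\setminus\supp(E)$ and has a logarithmic singularity along $E$. This is property~(i).

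For property~(ii), I would use the global current equation satisfied by $\lambda_{E'}$. With the normalization by the translation-invariant $(1,1)$-form $\omega_A$ on $A(\C)$ defined by $\omega_A = \frac{1}{2g}\sum_i\eta_i\wedge\bar{\eta}_i$ (so that $\iota^*\omega_A = d\mu$), one has, as currents on $A(\C)$,
\[
\frac{i}{\pi}\partial\bar{\partial}\lambda_{E'} = g\cdot\omega_A - \delta_{E'},
\]
where $\delta_{E'}$ denotes integration along $E'$. Pulling this back by $\iota$ and using both $\iota^*\omega_A = d\mu$ and $\iota^*\delta_{E'} = \delta_E$ (the latter being the content of the first step), I obtain
\[
\frac{i}{\pi}\partial\bar{\partial}(\lambda_{E'}\circ\iota) = g\cdot d\mu - \delta_E
\]
as currents on $C(\C)$. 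Restricting to $C(\C)\setminus\supp(E)$, the delta term vanishes and we get $\frac{i}{\pi}\partial\bar{\partial}(\lambda_{E'}\circ\iota) = \deg(E)\cdot d\mu$, which is property~(ii).

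The main obstacle is the first step, the divisor-theoretic identification $\iota^*(E') = E$. This requires choosing the base point of $\iota$ compatibly with the normalization of $\Theta$ (equivalently, correctly absorbing the Riemann constant), since an incorrect base point would introduce an unwanted translation that breaks the identification. A secondary but important technical issue is matching normalizations of $d\mu$, of $\omega_A$, and of the N\'eron function so that the coefficient in front of $\omega_A$ in the curvature equation is exactly $g = \deg(E)$; once the canonical volume form $d\mu$ is defined as the pullback of $\omega_A$, however, this bookkeeping is forced.
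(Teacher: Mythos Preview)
The paper does not actually prove this theorem; its entire proof is the single citation ``See \cite[Theorem~13.5.2]{LangFund}.'' Your proposal therefore goes well beyond what the paper does, and is in fact a reasonable reconstruction of the argument that lies behind that reference.

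Your outline is correct in structure. The identification $\iota^*(E')=E$ via Riemann's theorem, together with the non-specialness hypothesis to ensure uniqueness, is exactly the divisor-theoretic input needed; and the transfer of the two analytic properties (logarithmic singularity along $E'$, and the curvature identity $\frac{i}{\pi}\partial\bar\partial\lambda_{E'}=g\,\omega_A$ as $(1,1)$-forms off the support) along the holomorphic embedding $\iota$ is the right mechanism. Your caveats about base-point normalization for $\iota$ and the Riemann constant are well placed: in Lang's treatment these are absorbed into the definition of $\Theta$ and of the Abel--Jacobi map, so once those conventions are fixed the identification $\iota^*(([-1]^*\Theta)_P)=E$ is exactly Riemann's theorem in the form \cite[Theorem~5.5.8, Theorem~13.4.1]{LangFund}. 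One small point: for property~(ii) you only need the equation of smooth forms on $C(\C)\setminus\supp(E)$, so the current formulation with $\delta_{E'}$ is more than necessary (though not wrong); it suffices to pull back the smooth $(1,1)$-form identity and invoke $\iota^*\omega_A=d\mu$, which is precisely how $d\mu$ was defined.
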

\begin{proof}
See \cite[Theorem~13.5.2]{LangFund}.
\end{proof}
The great news is that it is not difficult to find N\'eron functions with respect to $\Theta$; we show below that this suffices for our purposes. 

\begin{defn}
Let $g\ge1$ and $a,b\in\Q^g$. Let the function $\theta_{a,b}$ on $\C^g\times \mathfrak{h}_g$ be given by
\[
 \theta_{a,b}(z,\tau)=\sum_{m\in\Z^g}\exp\left(2\pi i\left(\frac{1}{2}(m+a)^T\tau(m+a)+(m+a)^T(z+b)\right) \right).
\]
We call $\theta_{a,b}$ the {\em theta function with characteristic $[a;b]$}.
\end{defn}
Now let $a=(1/2,\ldots,1/2),b=(g/2,(g-1)/2\ldots,1,1/2)\in\Q^g$ and consider $\theta_{a,b}(z):=\theta_{a,b}(z,\tau)$ as a function on $\C^g$. 

\begin{prop}\label{neronpazuki2}(Pazuki)
The function $\theta_{a,b}$ has divisor $j^{*}(\Theta)$. Moreover, the following function is a N\'eron function associated with $\Theta$:
\[
 \lambda_{\Theta}(P)=-\log|\theta_{a,b}(z(P))|+\pi \Im(z(P)) ^T (\Im(\tau))^{-1} \Im(z(P)),
\]
where $j(z(P))=P$.
\end{prop}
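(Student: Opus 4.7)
The statement has two parts, which I would prove separately using the standard dictionary between theta functions with characteristics and translates of the Riemann theta divisor, together with the classical ``theta metric'' computation that realizes Néron functions analytically.

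For the divisor identification, I would start from the standard identity
\[
\theta_{a,b}(z,\tau) = \exp\!\bigl(\pi i\, a^T\tau a + 2\pi i\, a^T(z+b)\bigr)\,\theta_{0,0}(z+\tau a+b,\tau),
\]
obtained by completing the square inside the summation defining $\theta_{a,b}$. Since the prefactor is nowhere vanishing, the divisor of $\theta_{a,b}$ is the translate of $\mathrm{div}(\theta_{0,0})$ by $-(\tau a+b)$. By Riemann's vanishing theorem, $\mathrm{div}(\theta_{0,0})$ descends on $\C^g/\Lambda$ to the translate $W_{g-1}+\kappa$ of the image of $\mathrm{Sym}^{g-1}(C)$ under the Abel--Jacobi map, where $\kappa$ is the Riemann constant determined by the base point of $\iota$. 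Matching characteristics means exhibiting that $\tau a+b\equiv \kappa \pmod{\Lambda}$ for the specific $a,b$ given; this is the one genuinely curve-dependent step and is the main obstacle, since it requires knowing exactly how $\iota$ has been normalized in Pazuki's setup. Once this congruence is verified, $\mathrm{div}(\theta_{a,b})=j^*(\Theta)$ follows.

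For the Néron function claim, I would first show that $\lambda_\Theta$ descends to $A(\C)$. The quasi-periodicity of $\theta_{a,b}$ along $\Lambda$ gives, for $w=m+\tau n\in\Lambda$,
\[
|\theta_{a,b}(z+w)| = |\theta_{a,b}(z)|\cdot\exp\!\bigl(\pi n^T\Im(\tau)n+2\pi n^T\Im(z)\bigr),
\]
so $-\log|\theta_{a,b}(z+w)|=-\log|\theta_{a,b}(z)|-\pi n^T\Im(\tau)n-2\pi n^T\Im(z)$. Setting $H(z):=\pi \Im(z)^T(\Im\tau)^{-1}\Im(z)$, a direct expansion using $\Im(z+w)=\Im(z)+\Im(\tau)n$ yields $H(z+w)=H(z)+2\pi n^T\Im(z)+\pi n^T\Im(\tau)n$, so the two corrections cancel and $\lambda_\Theta$ is $\Lambda$-invariant, hence well-defined on $A(\C)\setminus\mathrm{supp}(\Theta)$.

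Finally, I would check the axioms for a Néron function as listed in \cite[Chapter~11]{LangFund}: $\lambda_\Theta$ is smooth outside $\mathrm{supp}(\Theta)$ because $H$ is real-analytic and $\theta_{a,b}$ is holomorphic and nonvanishing there; it has a logarithmic singularity along $\Theta$ because $\theta_{a,b}$ vanishes to first order along its zero divisor and $H$ is bounded locally; and the functoriality/quasi-periodicity relation characterizing Néron functions (the ``divisorial transformation law'' with respect to translations) is precisely the $\Lambda$-invariance verified above, translated from $\C^g$ to $A(\C)$. Uniqueness of the Néron function up to an additive constant then identifies $\lambda_\Theta$ as the Néron function associated with $\Theta$.
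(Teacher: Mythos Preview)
Your approach is sound and in fact coincides with the alternative route the paper explicitly mentions (``Alternatively one can show directly that $\lambda_\Theta$ satisfies the properties of a N\'eron function''). Two points are worth flagging.

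For the divisor identification, the paper avoids computing the Riemann constant $\kappa$ directly. Instead it observes that the chosen characteristic $[a;b]$ is \emph{odd}, so $\theta_{a,b}$ is an odd function and its divisor is symmetric under $[-1]$. Combined with Riemann's theorem (\cite[Theorem~13.4.1]{LangFund}) that $\mathrm{div}(\theta_{0,0})$ is $\Theta$ translated by a point $w$ with $2w$ the image of the canonical class, this pins down $\mathrm{div}(\theta_{a,b})$ as $j^*(\Theta)$ without having to match $\tau a+b$ against $\kappa$ for the specific $\iota$. Your route via the translation identity and Riemann's vanishing theorem is correct in principle, but as you yourself note, verifying $\tau a+b\equiv\kappa\pmod\Lambda$ requires knowing the normalization of $\iota$; the oddness argument sidesteps this.

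For the N\'eron function claim, your quasi-periodicity computation is correct and shows that $\lambda_\Theta$ descends to $A(\C)$. However, your final paragraph conflates $\Lambda$-invariance with the N\'eron function axioms: $\Lambda$-invariance is merely well-definedness on $A$, whereas the characterizing property of a N\'eron function (beyond smoothness and the logarithmic singularity) is a normalization condition --- in Lang's treatment, that $\lambda_\Theta$ arises as $-\log$ of the absolute value of a \emph{normalized} theta function in the sense of \cite[Theorem~13.1.1]{LangFund}, or equivalently that $\frac{i}{\pi}\partial\bar\partial\lambda_\Theta$ is the translation-invariant $(1,1)$-form representing $c_1(\O(\Theta))$. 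This is easy to supply: $\log|\theta_{a,b}|$ is pluriharmonic off its zeros, and $H(z)=\pi\Im(z)^T(\Im\tau)^{-1}\Im(z)$ has constant $\partial\bar\partial$, so the curvature is the required invariant form. The paper's primary argument simply cites \cite[Theorem~13.1.1]{LangFund} applied to the normalized theta function $\theta'_{a,b}(z)=\theta_{a,b}(z)\exp\bigl(\tfrac{\pi}{2}z^T(\Im\tau)^{-1}z\bigr)$, which packages the same computation.
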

\begin{proof}
This was stated without proof by Pazuki in \cite[Proposition~3.2.11]{PazukiThesis}, but it
is in fact rather easy to verify: It is a classical theorem (see \cite[Theorem~13.4.1]{LangFund}) that the divisor of the Riemann theta function
$\theta=\theta_{0,0}$ is a translate by a point $w$ of $\Theta$ and that $2w$ is the image
on $A$ of the canonical class on $C$. Using this it is not hard to see that the odd
function $\theta_{a,b}$ has divisor $j^{*}(\Theta)$. Then one uses \cite[Theorem~13.1.1]{LangFund} to find an expression of a N\'eron function in terms of the normalized theta function \[\theta'_{a,b}(z):=\theta_{a,b}(z)\exp\left(\frac{\pi}{2}z^T(\Im \tau)^{-1}z\right);\] the right hand side in Proposition \ref{neronpazuki2} is equal to this expression after a straightforward manipulation.
 
Alternatively one can show directly that $\lambda_{\Theta}$ satisfies the properties of a N\'eron function. 
\end{proof}
Now suppose that $E=E_1-E_2$, where $E_1,E_2\in\Div(C)$ are non-special divisors with disjoint support, and let $D_1=\sum^d_{i=1} (P_i)$ and $D_2=\sum^d_{i=1} (Q_i)$ be two effective divisors such that $\supp(E_i)\cap\supp(D_j)=\emptyset$ for $i,j\in\{1,2\}$. 
\begin{cor}\label{GreenForm}
 We have 
 \begin{align*}
&\langle D_1-D_2,E_1-E_2\rangle_v\\
  =&-\log\prod^d_{i=1}\frac{|\theta_{a,b}(z(\iota(P_i))-z(S(E_1)))\theta_{a,b}(z(\iota(Q_i))-z(S(E_2)))|_v}{|\theta_{a,b}(z(\iota(P_i))-z(S(E_2)))\theta_{a,b}(z(\iota(Q_i))-z(S(E_1)))|_v}\\
  &-2\pi\sum^d_{i=1}\Im (z(S(E_1)-S(E_2)))^T\Im (\tau)^{-1}\Im (z(\iota(P_i))-z(\iota(Q_i))), 
 \end{align*} where for any $Q\in A$ the tuple $z(Q)\in\C^g$ satisfies $j(z(Q))=Q$.
\end{cor}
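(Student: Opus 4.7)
My plan is to combine Theorem \ref{ArakHril} with the explicit N\'eron function of Proposition \ref{neronpazuki2} and then carry out a routine algebraic simplification. By the definition of the archimedean local N\'eron symbol and bilinearity,
\[
\langle D_1-D_2,\,E_1-E_2\rangle_v \;=\; g_{E_1}(D_1-D_2) \,-\, g_{E_2}(D_1-D_2),
\]
where $g_{E_k}$ is any almost-Green's function with respect to $E_k$ and the canonical volume form; since $D_1-D_2$ has degree zero, the usual constant-of-integration ambiguity in $g_{E_k}$ is harmless. By Theorem \ref{ArakHril} I take $g_{E_k}=\lambda_{E_k'}\circ\iota$ with $E_k'=([-1]^*\Theta)_{S(E_k)}$; the standard functorial properties of N\'eron functions under pullback by $[-1]$ and under translation give, up to an additive constant, $\lambda_{E_k'}(x)=\lambda_{\Theta}(S(E_k)-x)$. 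Substituting the formula for $\lambda_\Theta$ from Proposition \ref{neronpazuki2} then makes every term fully explicit in $\theta_{a,b}$ and $(\Im\tau)^{-1}$.

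Next, bilinearity expands
\[
\langle D_1-D_2,\,E_1-E_2\rangle_v = \sum_{i=1}^d\bigl[\lambda_{E_1'}(\iota(P_i))-\lambda_{E_1'}(\iota(Q_i))-\lambda_{E_2'}(\iota(P_i))+\lambda_{E_2'}(\iota(Q_i))\bigr],
\]
and each summand splits into a $-\log|\theta_{a,b}|$-part and a $\pi\cdot(\cdot)^T(\Im\tau)^{-1}(\cdot)$-part. Writing $s_k=z(S(E_k))$, $p_i=z(\iota(P_i))$, $q_i=z(\iota(Q_i))$, the logarithmic pieces collect into
\[
-\log\prod_{i=1}^d\frac{|\theta_{a,b}(s_1-p_i)|\,|\theta_{a,b}(s_2-q_i)|}{|\theta_{a,b}(s_1-q_i)|\,|\theta_{a,b}(s_2-p_i)|};
\]
since $\theta_{a,b}$ is odd (so $|\theta_{a,b}|$ is even), this matches the first summand of the corollary. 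For the quadratic pieces, the identity $(y-u)^T A(y-u)-(y-v)^T A(y-v) = -2y^T A(u-v) + u^T A u - v^T A v$ applied to the pairs $(T_1,T_2)$ and $(T_3,T_4)$ of the alternating sum $T_1-T_2-T_3+T_4$, with $T_k$ the contribution from $E_1', E_1', E_2', E_2'$ in order, shows that the ``pure'' $u_i^T A u_i$ and $v_i^T A v_i$ terms cancel in pairs, leaving $-2\,\Im(s_1-s_2)^T(\Im\tau)^{-1}\Im(p_i-q_i)$. Multiplying by $\pi$ and summing over $i$ gives the second summand.

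The only real obstacle is careful bookkeeping of conventions: the sign induced by $[-1]^*$, the translation convention for divisors versus functions, and the additive normalization of $\lambda_\Theta$ each introduce ambiguities, but all of them cancel against the degree-zero divisor $D_1-D_2$. Similarly, the lifts $s_k,p_i,q_i\in\C^g$ are only defined modulo $\Lambda$; the two summands of $\lambda_\Theta$ individually fail to be $\Lambda$-invariant, but their combination is, so the final formula is manifestly independent of these choices.
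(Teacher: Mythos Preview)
Your proof is correct and follows essentially the same approach as the paper: invoke Theorem~\ref{ArakHril}, reduce $\lambda_{E_k'}$ to $\lambda_\Theta$ via the translation invariance of N\'eron functions and the fact that $[-1]^*\Theta$ is a translate of $\Theta$, then plug in Proposition~\ref{neronpazuki2}. The paper's proof is a three-sentence sketch citing exactly these ingredients; you have simply carried out the algebraic bookkeeping (the oddness of $\theta_{a,b}$, the quadratic-form identity, and the cancellation of normalization constants against $\deg(D_1-D_2)=0$) that the paper leaves implicit.
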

\begin{proof}
N\'eron functions are invariant  under translation of the divisor up to an additive
constant, see \cite[Theorem~11.2.1]{LangFund}.
But according to \cite[Theorem~5.5.8]{LangFund}, $[-1]^*(\Theta)$ is just $\Theta$ translated by $S(\mathfrak{K})$, where $\mathfrak{K}$ is a canonical divisor. Hence the desired result follows from Theorem \ref{ArakHril} and Proposition \ref{neronpazuki2}.
\end{proof}
\begin{rk}
In \cite{David} Holmes gives a more direct proof of Lemma \ref{GreenForm} using
\cite[\S13.6/7]{LangFund}, which relies on the theory of differentials of the third kind.
\end{rk}

We can use the previous result to compute intersections at archimedean places. In practice we need to be able to do the following:
\begin{itemize}
	\item[1)] Given $E\in \Div^0(C)$, find non-special $E_1,E_2$ such that $E=E_1-E_2$.
	\item[2)] Compute the period matrix $\tau$.
	\item[3)] Given $P_1\in C(\C)$ and $\tau$, determine $z\in \C^g$ such that $j(z)=\iota(P_1)$.
	\item[4)] Given $\tau$ and $z\in\C^g$, compute $\theta_{a,b}(z)=\theta_{a,b}(z,\tau)$. 
\end{itemize}

\section{The hyperelliptic case}\strut\\[1mm]\label{hyper}
We now discuss how the methods outlined in the previous section can be combined to give a practical algorithm for the computation of canonical heights in the case of hyperelliptic curves.

Suppose that $C$ is a hyperelliptic curve of genus $g$ defined over a field $k$, given as the smooth projective model of an equation
\begin{equation}\label{heq}
 Y^2+H(X,1)Y=F(X,1),
\end{equation}
where $F(X,Z),H(X,Z)\in k[X,Z]$ are forms of degrees $2g+2$ and $g+1$, respectively, and
the discriminant of the equation \eqref{heq} is nonzero. We will vary $k$ as in general discussion of the previous sections.

A different, but related approach to the computation of the local N\'eron symbols has been
developed independently by Holmes \cite{David}. 
The main difference lies in the computation of the non-archimedean intersection
multiplicities.
In \cite{David}, norm maps of non-archimedean field extensions are used instead of our
Gr\"obner basis approach.

\subsection{Finding suitable divisors of degree zero}\strut\\[1mm]\label{Move1h}
 Suppose that $D\in\Div(C)(k)$ has degree zero. Then the notions introduced in Section \ref{Move1} are all well-known: The reduction process is part of Cantor's algorithm for the addition of divisor classes introduced in \cite{Cantor}; here the divisor $A$ used for reduction is equal to $(\infty)$ when we have a $k$-rational Weierstrass point $\infty$ at infinity and is equal to $(\infty_1)+(\infty_2)$ when there are two branches $\infty_1,\infty_2$ over the singular point at infinity in the projective closure of equation \eqref{heq}.

In the former case Lemma \ref{RedEx} says that the reduction process yields the unique effective divisor $\tilde{D}$ such that \[D\sim\tilde{D}+r(\infty),\] where $0\le-r=\deg{\tilde{D}}\le g$ and $\deg(\tilde{D})$ is minimal. In the latter case it turns out that when $g$ is even we can still find a unique $\tilde{D}$ of minimal nonnegative even degree $-r\le g$ such that \[D\sim\tilde{D}+\frac r2((\infty_1)+(\infty_2))\] if we impose further conditions on its ideal representation. Conversely, if $g$ is odd we might have to take reductions of degree $g+1$ into account and these are not unique. However, uniqueness of the reduction is not an essential property in our applications and so we shall not discuss it any further. 

A possible ideal representation of a reduced effective divisor $D$ is given by the {\em Mumford representation} which we now recall briefly. 

If we view $C$ as embedded in weighted projective space with weights $1,g+1,1$ assigned to the variables $X,Y,Z$, then it is given by the equation
\[
Y^2+H(X,Z)Y=F(X,Z).
\]
An effective divisor $D$ of degree $d\le g+1$ corresponds to a pair of homogeneous forms $(A(X,Z),B(X,Z))$, where $A(X,Z)$ and $B(X,Z)$ have degrees $d$ and $g+1$ respectively, such that $D$ is defined by \[A(X,Z)=0=Y-B(X,Z)\] and we impose the additional condition that
\[A(X,Z)\;|\; B(X,Z)^2+H(X,Z)B(X,Z)-F(X,Z).\]

First suppose that there is a unique Weierstrass point $\infty$ at infinity in $C(k)$. Then any nonzero effective divisor $D=\sum^d_{j=1}(P_j)$ that is reduced along $(\infty)$ has degree $d\le g$ and cannot contain $\infty$ in its support. Hence we can safely dehomogenize in order to represent $D$ and so we may take
\[
I_D=(a(x),y-b(x)),
\]
where $a(x)=A(x,1)$ and $b(x)=B(x,1)$, for its ideal representation. More concretely, we have
 \[a(x)=\prod^n_{j=1}(x-x(P_j))\] 
 and $b(x)$ has minimal degree such that
 \[
  b(x(P_j))=y(P_j)\text{  for }j=1,\ldots,d.
 \]

Conversely, suppose that there are two points $\infty_1,\infty_2$ at infinity. Suppose that $D$ is reduced along $(\infty_1)+(\infty_2)$. If $\supp(D)$ does not contain a point at infinity, then we can dehomogenize as before to find an affine representation. If this does not hold, say $\infty_1\in \supp(D)$, then necessarily $\infty_1,\infty_2\in C(k)$ and $\infty_2\notin\supp(D)$. This case is more subtle, because we cannot tell the multiplicity of $\infty_1$ in $D$ from its dehomogenized form. For our applications it suffices to treat the affine and the infinite part of $D$ separately. Hence this complication does not cause any trouble.

Now let $k$ be a global field, and let the divisor $D_\infty$ be defined by $2(\infty)$ if there is a unique $k$-rational point at infinity and by $(\infty_1)+(\infty_2)$ otherwise. Also suppose $d$ is even and \[D=\tilde{D}-\frac d2D_\infty,\] where $\tilde{D}=\sum^d_{i=1}(P_i)$ is reduced along $D_{\infty}$ such that no $P_i$ is a point at infinity or a Weierstrass point. Then we can always use $n_1=1$ and $n_2=-1$ in the method introduced in Section \ref{Move1}; this is due to Holmes, see \cite{David}. Namely, if we apply the hyperelliptic involution 
\[
 Q\mapsto Q^-
\]
 to the points $P_i$, then we have
 \[
 D'=\sum^d_{i=1}(P^-_i)-\frac d2D_\infty\sim-D.
 \]
If we move this by the divisor of a function $x-\zeta$, where $\zeta\in k$ is such that $x(P_i)\ne \zeta$ for all $P_i$, then we find \[\supp(D)\cap\supp(E)=\emptyset,\] where $E=D'+d/2\div(x-\zeta)$. 
This corresponds to choosing $A=D_\infty$ and $A'=D(\zeta)$ in the method outlined above, where $D(\zeta)=\div(x-\zeta)+D_\infty$.
 
Instead of computing $\langle D,D\rangle$, we can now compute 
\[
 \hat{h}(P)=-\langle D,D\rangle=\langle D,-D\rangle=\langle D,E\rangle.
\]

If we have 
\[
D=\tilde{D}-\frac d2D_\infty,
\]
where
\[
 \tilde{D}=\sum_{i=1}^{d'} (P_i)+n_\infty(\infty_1)
\]
is reduced along $D_\infty=(\infty_1)+(\infty_2)$, such that $d=d'+n_\infty$  and all $P_i$ are affine non-Weierstrass points (see Section \ref{Move1h}), then we also have to move $D$ away from $\infty_1$ using a function $x-\zeta'$, where $x(P_i)\ne\zeta'\ne\zeta$ for all $i=1,\ldots,d'$. The computation becomes
\[
 -\langle D,D\rangle=\langle \sum_{i=1}^{d'} (P_i)+n_\infty(\infty_1)-\frac d2D({\zeta'}), \sum_{i=1}^{d'} (P^-_i)+n_\infty(\infty_2)-\frac d2D({\zeta})\rangle
\]
and poses no additional problems due to the bilinearity of the local N\'eron symbol.

What if there is a unique rational Weierstrass point $\infty$ at infinity and $d$ is odd? In that case we use
\[
 D'=2\sum^d_{i=1}(P^-_i)-dD_\infty\sim-2D
\]
and compute
\[
 \hat{h}(P)=-\langle D,D\rangle=\langle D,-D\rangle=\frac12\langle D,E\rangle,
\]
where $E=D'+d\div(x-\zeta)$ and $\zeta$ is as above. Note that we can still use the reduced Mumford representation, because we have
\[
\langle D,E\rangle=2\langle D,\sum^d_{i=1}(P^-_i)\rangle-d\langle D,D(\zeta)\rangle.
\]
Finally, if $\supp(D)$ contains an affine Weierstrass point, then we simply compute $\hat{h}(P)=\frac1{n^2}\hat{h}(nP)$ such that $nP$ has a reduced representation not containing an affine Weierstrass point.

\subsection{Determining relevant non-archimedean places}\strut\\[1mm]\label{factoringh}
Suppose that $k$ is a global field. Our curve $C$ is covered by two affine patches $C^1$ and $C^2$, where 
\begin{equation}\label{C1}
 C^1:y^2+H(x,1)y=F(x,1)
\end{equation}
and 
\begin{equation}\label{C2}
 C^2:w^2+H(1,z)w=F(1,z).
\end{equation}
It follows from the discussion at the end of Section \ref{factoring} that we can assume
$\O_k$ to be Euclidean. Suppose that $D$ and $E$ are
$I_{D,1}=(a(x),cy-b(x))$ and $I_{E,1}=(a'(x),c'y-b'(x))$ on $C^1$, respectively (where we have
multiplied all polynomials by the least common multiple of the denominators of their coefficients, if necessary). Then we need to compute a Gr\"obner basis of
\[
I_{D,E,1}=(y^2+H(x,1)y-F(x,1),a(x),a'(x),cy-b(x),c'y-b'(x))
\]
and factor the unique element $q_{D,E,1}$ of $\O_k$ appearing in this basis. 

Now suppose that $v\in M_k^0$ satisfies $i_v(D_{v,\KC},E_{v,\KC})>0$, where
$\KC\to\overline{C}^v$
is a desingularization in the strong sense and that the points of
intersection do not map to the closure of $C^1$ in $\KC$. 
Any such $v$ must satisfy $v(a_d)>0$ and $v(a'_{d'})>0$, where $a_d$ and $a'_{d'}$ are the leading coefficients of $a(x)$ and $a'(x)$, respectively. 
So instead of computing a Gr\"obner basis of $I_{D,E,2}$, we can factor $\gcd(a_d,a'_d)$
which is usually much easier than factoring $(q_{D,E,2})$. This simplification can make a big difference in practice. 

\subsection{Computing non-archimedean intersection multiplicities and the correction term}\strut\\[1mm]\label{nainth}
Let $k$ denote a non-archimedean local field and let $\pi$ be a uniformizing element.

Let  $D\in\Div(C)(k)$ be effective such that an ideal representation of $D$ on $C^1$ is 
\[
 I_{D,1}=(a(x),y-b(x)),
\]
where $a(x),b(x)\in k[x]$ and we have $\deg(a)\le g$ and $\deg(b)\le g+1$ as in Section \ref{Move1h}. 

In order to use Proposition \ref{IntForm} to compute non-archimedean intersection
multiplicities, we need to be able to decompose divisors into prime divisors over
unramified extensions. 
The main point distinguishing the hyperelliptic case from the general situation is that we
can decompose divisors by factoring univariate polynomials over non-archimedean local
fields; we show how this can be used in this section.

Note that factoring univariate polynomials over $p$-adic fields and Laurent series over finite fields is implemented in {\tt Magma} (following work of Pauli \cite{Pauli}). 

We first deal with the case $\KC=\overline{C}$ and use the affine cover $\KC=\KC^1\cup\KC^2$,
where $\KC^i$ is the Zariski closure over $S$ of the affine curve $C^i$ defined as in Section
\ref{factoringh}.

We can factor $a(x)=a_1(x)a_2(x)$, where $a_2(x)$ is constant modulo $\pi$ and $a_1(x)\in R[x]$. This corresponds to a decomposition $D=D_1+D_2$, where $D_{1,\KC}$ lies in $\KC^1$ and $D_{2,\KC}$ lies in $\KC^2$. More precisely, we have 
\[
I_{D_1,1}=(a_1(x),y-b_1(x)),
\]
where $b_1(x)=b(x) \bmod{a_1(x)}$. In order to use Proposition \ref{IntForm}, we need
$b_1(x)\in R[x]$. Suppose that $a_1(x)$ is irreducible (otherwise factor $a_1(x)$ into
irreducibles) and that $b_1(x)\notin R[x]$. If $D_{1,\KC}$ does not have a singular point
of the special fiber $\overline{C}_v$ in its support (for instance, if $a_1$ is
unramified), then we can safely extend $k$ by a root of $a_1$ and work over this
extension. Repeating this process, if necessary, leads to a finite extension $k'$ of $k$
such that $D_{1,\KC}$ splits into prime divisors over $k'$ that have $R'$-rational ideal
representations, where $R'$ is the ring of integers of $k'$.

Now suppose that $a_1(x)$ reduces to $(x-a)^m$ mod $\pi$, where $a$ is the $x$-coordinate
of a singular point of $\overline{C}_v$ and $m\ge1$. In general we cannot extend the field by a
root of $a_1$, because there may be points in the support of $D_{1,\KC}$ that are not
regular over this extension. But because of the special shape of $a_1$, we can simply use
the $R$-rational ideal representation $(a_1(x), \pi^sy-b'_1(x))$, where
$b_1(x)=\pi^{-s}b'_1(x)$,  and $b_1(x)\in R[x]$ has a unit among its coefficients.
Note that this approach does not always work for more general $a_1(x)$.

If we have $\KC\ne\overline{C}$, then we simply start by factoring $a(x)$ into irreducibles over
$k^{\mathrm{nr}}$. 
Assuming that $a_1(x)$ is one of the irreducible factors, we lift the ideal representation $I_{D_1,\KC}=(a_1(x), \pi^sy-b'_1(x))$ recursively through suitable blow-ups until we arrive at a suitable affine patch where the intersection multiplicities can be computed using Proposition \ref{IntForm}. 
As explained in Subsection \ref{CompCorrTerm}, this is also sufficient to compute the correction term.

\subsection{Computing archimedean intersection multiplicities}\strut\\[1mm]\label{ArchimInth}
In order to compute archimedean intersection multiplicities, we need algorithms for steps 1)--4) introduced at
the end of Section \ref{ArchimInt}.

For hyperelliptic curves, steps~2),~3) and~4) have all been implemented in {\tt Magma} by
van Wamelen. 
An earlier version of the implementation using {\tt Mathematica} can be found on van
Wamelen's homepage \cite{vanWamelen}. The routines there only work for genus~2 curves, but
the general algorithms in {\tt Magma} work similarly.

It is important to note that step 4), the computation of $\theta_{a,b}$, is done via approximation using
the definition, in particular it can be used to compute $\theta_{a,b}$ provably up to
desired precision.

We discuss step 1). Here we want to find, given $P,Q\in A$, divisors $D_1,\; D_2,\;E_1$ and $E_2$ such that
\begin{itemize}
	\item[(a)] $[D_1-D_2]=P$ and $[E_1-E_2]=Q$,
	\item[(b)] $D_1,\;D_2,\;E_1,\;E_2$ are effective and have pairwise disjoint support,
	\item[(c)] $E_1$ and $E_2$ are non-special.
\end{itemize}
We can allow ourselves more freedom, and only require that (a) holds for some multiple
$nQ$, due to the bilinearity of the local N\'eron symbol. For simplicity we only discuss
the case of a unique point at infinity, the other case being similar with a few minor subtleties if $g$ is odd. We pick $D_1:=\tilde{D}$ and $D_2:=d(\infty)$ if $P$ is represented by $\tilde{D}-d(\infty)$ and $\tilde{D}$ has affine support. Suppose that $nQ$ is represented by $\tilde{E_n}-g(\infty)$, where $\tilde{E_n}$ is non-special and has affine support such that $\tilde{E_n}$ and $\tilde{E_n'}$ have support disjoint from $\tilde{D}$, where $\tilde{E'_n}$ is the result of the hyperelliptic involution applied to the points in the support of $\tilde{E_n}$. Then $2nQ$ is represented by $\tilde{E_n}-\tilde{E_n'}$ and we choose $E_1:=\tilde{E_n}$ and $E_2:=\tilde{E_n'}$. .

With these choices, at most $d+g$ applications of the Abel-Jacobi map and at most $2d$ applications of the theta-function $\theta_{a,b}$ are required in order to compute $\langle D_1-D_2,E_1-E_2\rangle_v$ for an archimedean place $v$, essentially because we have $\iota(\infty)=0$.

Now let $\zeta\in k^*$ be as in Section \ref{Move1h}. We are actually interested in computing 
\begin{equation}\label{ulteq}
\langle \tilde{D}-d(\infty), \tilde{E}-e/2D(\zeta) \rangle_v,
\end{equation}so we compute a function $\beta\in k(C)^*$ such that
\[
\div(\beta)=E_1-E_2-2n\tilde{E}+ndD(\zeta)
\]
See \cite{Hess} for an algorithm that computes $\beta$. Using properties (a) and (c) of Proposition \ref{locneronprops} we can compute \eqref{ulteq}.

Notice that in contrast to the non-archimedean case the running times of steps 3) and 4) do not crucially depend on the heights of the points in the supports of the respective divisors, since we work with the complex uniformization.

\section{Examples}\strut\\[1mm]\label{exs}
In this section we provide a hyperelliptic example of a regulator that was computed using the algorithm outlined in the previous sections. Moreover, we shall discuss, at least in the case of hyperelliptic curves, how the running time changes as we increase
\begin{itemize}
\item[(a)] the genus of the curve;
\item[(b)] the size of the coefficients of the point.
\end{itemize}
All times are in seconds, unless noted otherwise. 
For the computations we have used~50 digits of $p$-adic and~30 digits of real and complex precision.

We first use the {\tt Magma}-implementation of our algorithm to compute the regulator of the Jacobian of a hyperelliptic genus~3 curve up to an integral square. 
\begin{ex}
Let $C$ be given by the smooth projective model of the equation
\[
Y^2=X(X-1)(X-2)(X-3)(X-6)(X-8)(X+8).
\]
The curve $C$ is a hyperelliptic curve of genus~3, defined over $\Q$. A quick search reveals the following rational non-Weierstrass points on $C$.
\[
(-2,\pm 240),(4,\pm48),(-6,\pm 1008)
\]
Let $A$ denote the Jacobian of $C$; obviously its entire 2-torsion subgroup is defined over $\Q$. In order to bound the Mordell-Weil rank of $A$ we compute the dimension of the 2-Selmer group of $A$ over $\Q$ using {\tt Magma}. This dimension is equal to~3 and hence we get an upper bound of~3 on the rank.  If $P_1,\ldots,P_n\in A$, then we denote the {\em regulator} of $P_1,\ldots,P_n$ by
\[
\Reg(P_1,\ldots,P_n)=\det \left((P_i,P_j)_{\mathrm{NT}}\right)_{i,j}.
\]

We want to compute the regulator $\Reg(P,Q,R)$ of the subgroup $G$ of $A(\Q)$ generated by the points 
\begin{eqnarray*}
P&=&(-2,-240)-(\infty)\\
Q&=&(4,-48)-(\infty)\\
R&=&(-6,1008)-(\infty).
\end{eqnarray*}

The discriminant of $C$ factors as $2^{50}3^{12}5^67^411^2$. We first find regular models
at the bad primes~2,~3,~5,~7 and~11. All computations in this example were done using {\tt
Magma} on a 1.73 GHz Pentium processor. It turns out that all computed regular models are
already minimal; we list the number of components of the special fiber of the respective
regular model, the (geometric) group of components $\Psi_p$ of the N\'eron model  and the time it took to compute the regular model in Table \ref{TabRegMod}.

\begin{table}\begin{tabular}{|c|c|c|c|}

\hline

prime & \# of comps.& $\Psi_p$ & time \\

\hline

2 & 14& $(\Z/2\Z)^5$ & 1.95 \\

3 &9 &  $(\Z/2\Z)^3\times\Z/4\Z$& 0.35 \\

5 &4 & $(\Z/2\Z)^3$& 0.23\\

7 & 3&$(\Z/2\Z)^2$& 0.29\\

11 &2&$\Z/2\Z$& 0.10\\
\hline

\end{tabular}\caption{Regular model data}\label{TabRegMod}\end{table}

After this preparatory step we now compute the entries of the height pairing matrix. The results and timings can be found in Table \ref{TabCanHts}, 
\begin{table}\begin{tabular}{|c|c|c|}

\hline

$S\in A(\Q)$ & $\hat{h}(S)$&  time \\

\hline

$P$ & 1.90008707521104082692048090266& 23.10  \\

$Q$ & 1.15261793630905629106514447088 & 19.76  \\

$R$ & 2.90090831616336727010940214290& 20.96 \\

$P+Q$&2.36481584203715381857836835238& 19.95 \\

$P+R$& 5.51584078564985349844572029952&20.67 \\
$Q+R$& 5.74901893484137170755580219303&21.22 \\
\hline\end{tabular}\caption{Canonical height computations}\label{TabCanHts}\end{table}

Using these results, we find
\[
\Reg(P,Q,R)=4.28880986177463283058861934366.
\]
We can test our findings by computing $\Reg(nP,mQ,lR)$ for several integral values of $n,m,l$. In all cases we get the relation
\[
\Reg(nP,mQ,lR)/\Reg(P,Q,R)=n^2m^2l^2
\]
up to an error of less than $10^{-29}$. 
As $\Reg(P,Q,R)$ is clearly non-zero, we know that $G$ is a subgroup of finite index and hence $\Reg(P,Q,R)$ equals $\Reg(A/\Q)$ up to a rational square. 

\end{ex}
Next we want to illustrate the behavior of the running time of our algorithm. We have refrained from a formal complexity analysis, mostly because the algorithm uses several external subroutines, such as the computation of regular models and of theta functions, whose complexities have not yet been analyzed. 

In the case of zero-dimensional ideals of polynomial rings over fields, the complexity of
a Gr\"obner basis computation can be shown to be polynomial in $D^n$, where $D$ is the
maximal degree of the elements of the basis we start with and $n$ is the number of
variables. See \cite{GroebCompl} for a summary of results regarding complexity of
Gr\"obner basis computations. In particular this holds for Faug\`ere's $F4$-algorithm
\cite{Faugere}, used for instance by {\tt Magma} (over fields and Euclidean rings). This
result can be extended easily to the case of polynomial rings over Euclidean domains,
provided we have fast algorithms available for the linear algebra computations in the
$F4$-algorithm, such as those implemented in {\tt Magma}. So the Gr\"obner basis
computations do not cause any trouble in practice, since the way regular models are
computed in {\tt Magma} ensures that the number of variables does not grow.

Indeed, the running time of the algorithm is usually dominated by the various analytic
computations required for the archimedean local N\'eron symbols.
They depend exponentially  on the genus; the  curve of largest genus that we have been able to compute with has genus~10, see Example \ref{ExCanFam} below. 
If the genus is not very large, but the size of the coefficients of the point $P\in A(k)$ that we want to compute the canonical height of is, then it turns out that the main bottlenecks are usually the factorizations alluded to in Section \ref{factoring}; recall that these are required in order to find out which places can lead to non-trivial non-archimedean local N\'eron symbols. 
See Example \ref{ExCanMult}. 
\begin{table}[!ht]\begin{tabular}{|c|c|c|c|c|}
\hline
$d$ &genus& $\hat{h}(P)$&act & nact \\
\hline
5 & 2& 1.20910894883943045491548486513 &3.51&0.33\\
7 & 3&  1.31935353209873515158774224282&6.70&0.34\\
9& 4&  1.39237255678179422540594853290&12.65&0.87\\
11& 5& 1.44187308116714103129667604112 &32.30&1.67\\
13&6& 1.47679608841931245229396457463 &120.51&2.99\\
15&7& 1.50265701979128671544005708236 &791.14&5.17\\
17&8& 1.52254076352483838532148827258 &4729.03&8.95\\
19&9&1.53829882683402848666502818888 &62535.55&14.20\\
21&10&1.55109127084768378637549292754&280731.59&21.35\\
\hline
\end{tabular}\caption{Canonical heights in a family}\label{TabCanFam}\end{table}

 \begin{table}[!ht]\begin{tabular}{|c|c|c|c|c|c|}
\hline
$n$ &$\hat{h}(nP)$& act & nact&factt&digits\\
\hline
1&0.19809838973401855248161508134&2.35&0.02&0.00&1\\
2&0.79239355893607420992646032538&2.38&0.02&0.00&1\\
3&1.78288550760616697233453573211&4.06&0.13&0.00&1\\
4&3.16957423574429683970584130153&3.36&0.11&0.00&1\\
5&4.95245974335046381204037703364&2.91&0.10&0.00&1\\
6&7.13154203042466788933814292846&3.39&0.08&0.00&3\\
7&9.70682109696690907159913898594&3.40&0.06&0.00&6\\
8&12.6782969429771873588233652061&3.36&0.34&0.05&9\\
9&16.0459695684555027510108215890&3.31&0.29&0.01&11\\
10&19.8098389734018552481615081345&3.41&0.95&0.64&16\\
11&23.9699051578162448502754248428&3.33&0.37&0.08&19\\
12&28.5261681216986715573525717137&3.45&0.47&0.11&21\\
13&33.4786278650491353693929487474&3.32&0.34&0.09&21\\
14&38.8272843878676362863965559437&3.42&196.87&196.50&30\\
15&44.5721376901541743083633933028&3.37&0.53&0.20&38\\
16&50.7131877719087494352934608245&3.39&0.90&0.25&42\\
\hline
\end{tabular}\caption{Canonical heights for multiples of a point}\label{TabCanMult}\end{table}

All computations for the following two examples were done using a 3.00 GHz Xeon processor.
\begin{ex}\label{ExCanFam}
Consider the family \[C_d:y^2=x^d+3x^2+1\] for $d\in\{5,7,9,11,13,15,17,19,21\}$ and let
$P=[(0,1)-(0,-1)]\in A_d(\Q)$, where $A_d$ is the Jacobian of $C_d$. We compute
$\hat{h}(P)$ and record the running time for both the archimedean and the non-archimedean computations. See Table \ref{TabCanFam}, where nact and act denote non-archimedean and archimedean computation time, respectively. This example illustrates the exponential dependency on the genus.
\end{ex}
\begin{ex}\label{ExCanMult}
	Next we consider $C:y^2=x^{10}-x^3+1$ and let $P\in A(\Q)=[
	(0,1)-\infty_+]\in\mathrm{Jac}(C)(\Q)$, where $\infty^+$ is the point at infinity such
	that the function $y/x^3$ has value~1 at $\infty_+$.
	The curve $C$ has bad reduction at~2.
	We use {\tt Magma} to compute a regular model at~2; this takes~1.97 seconds and
	yields~9 irreducible components.


We compute the canonical heights of positive multiples of $P\in A(\Q)$ and record running times. The results are in
Table \ref{TabCanMult} and we see that we have $\hat{h}(nP)=n^2\hat{h}(P)$ for all
$n\in\{1,\ldots,16\}$. Here nact and act have the same meaning as in Table
\ref{TabCanFam}, factt denotes the time spent on integer factorization and digits denotes
the number of digits of the maximal height of the coefficients of the polynomials in the
Mumford representation of $nP$.

For $n\ge17$, the time spent on factoring increases dramatically. 
For instance, the factorization needed for $n=18$ takes about~62 hours.
The largest multiple for which we are able to compute $\hat{h}(nP)$ is $n=21$, where
digits$=79$. 

\end{ex}

\section{Outlook}\strut\\[1mm]\label{outl}
It is now possible, using the {\tt Magma}-implementation of the algorithm described in
this work (see \cite{Homepage}), to compute canonical heights on Jacobians of hyperelliptic curves defined over number fields. There is work in progress on most of the applications outlined in the introduction. Some can now be tackled in a straightforward way, such as the computation of regulators up to integral squares, which can be used to gather numerical evidence for the conjecture of Birch and Swinnerton-Dyer as in \cite{FLSSSW}, some require more work to be done first, such as the computation of generators of the Mordell-Weil group. An algorithm for the latter is presented in \cite{StollH2}, but in order to apply it, one also needs a suitable naive height combining the properties that we can list all points of naive height up to some bound and that the difference between the two heights can be bounded effectively. 
Holmes has recently come up with a good candidate for such a naive height, the details
will appear in his upcoming PhD thesis at the University of Warwick. 
For the genus three case, there is recent work of Stoll \cite{StollG3} solving this
problem.

We now sketch some possible directions for further research regarding the canonical height algorithm itself. First, our algorithm works for any global field and hence it should not be too difficult to implement it for hyperelliptic curves defined over global function fields. In fact, some problems disappear because of the absence of archimedean places. More importantly, it would be interesting to extend our algorithm to the case of non-hyperelliptic curves. Here, there are essentially two problems:
\begin{itemize}
\item[(i)] How can we decompose divisors into prime divisors? (see Section \ref{factoring})
\item[(ii)] How can we implement the analytic steps 2) -- 4) introduced at the end of Section \ref{ArchimInt}?
\end{itemize}
There are 3 approaches to problem (i). If we could factor multivariate polynomials over non-archimedean local fields, then (i) would be solved, but such an algorithm has not been implemented to the author's knowledge. In favorable situations it may be possible to use ideal representations similar to the Mumford representation of hyperelliptic curves and thus decompose divisors using factorization of univariate polynomials. An ideal representation resembling Mumford representation has been proposed in \cite{spq} for smooth plane quartics. Finally, it might not be necessary to decompose divisors at all, if we could make the approach mentioned in Remark \ref{NoFac} and described in \cite{HessStudent} work in our situation.

Of course, problem (i) disappears whenever we deal with divisors having pointwise $k_v$-rational support for each relevant non-archimedean local field $k_v$. We have used this to compute all non-archimedean local N\'eron symbols necessary for the computation of the regulator (up to an integral square) of the Jacobian of a non-hyperelliptic curve of genus~4 without special properties, see \cite[Chapter~6]{thesis}. This curve plays a major part in \cite{XDyn}, where it is shown, assuming the first part of the conjecture of Birch and Swinnerton-Dyer, that there are no rational cycles of length~6, an important result in arithmetic dynamics. Our goal was to verify the second part of the conjecture of Birch and Swinnerton-Dyer up to an integral square for this particular curve, a challenge problem posed by Stoll in \cite{XDyn}. 

Regarding problem (ii), an extension of van Wamelen's algorithms to the non-hyperelliptic case would suffice. 
This does not appear to be particularly difficult, but we have not attempted it. 
The main obstacle is the choice of a basis of holomorphic differentials, which will
probably depend on the class of curves under consideration (for hyperelliptic curves there
is, of course, a canonical choice and this is used by van Wamelen's algorithms).

If we are willing to allow non-rigorous numerical integration methods, then all of the 
relevant algorithms have been developed (see \cite{CompPeriod, CompTheta, CompAbel}) by Deconinck et al. for general compact Riemann surfaces. 
However, these algorithms are not suitable in order to actually {\em prove} that we have computed the
correct canonical height up to given precision.

In any case, Deconinck and his collaborators implemented their algorithms in {\tt Maple} in a
package called {\em algcurves}.
Unfortunately, the {\tt Maple} developers have since decided to change some of the functions that algcurves uses, in the process destroying some of the package's crucial functionality. 
Deconinck's group are currently working on a long-term project to rewrite all necessary routines in {\tt Sage}. 

Finally, it would be interesting to formally analyze the complexity of our algorithm. 
While knowing that the dependence on the genus is exponential due to the necessity of computing theta-functions, we first need to analyze the complexity of {\tt Magma}'s desingularization algorithm and the analytic algorithms that we use before more can be said.

\begin{rk}
Let $A$ be the Jacobian of a smooth projective curve $C$ defined over a number field and let
$p$ be a prime such that $A$ has good ordinary reduction at all $v\mid p$.
Coleman and Gross \cite{ColemanGross} have constructed a pairing taking values in $\Q_p$ between divisors $D,E$ on
$C$ of degree~0 which can be decomposed into a sum of local height pairings $h_v(D,E)$ over all non-archimedean $v\in M_k$. 

They show that this pairing respects linear equivalence and coincides with the $p$-adic height
pairings on $A$ constructed by Schneider \cite{Schneider} and Mazur-Tate \cite{MT}.


If $v\mid p$, then $h_v(D,E)$ can be expressed in terms of Coleman integration. An
algorithm for the computation of $h_v(D,E)$ was introduced by Balakrishnan and Besser in
\cite{BalaBess}, see also \cite[Chapter~8]{JenThesis}. 
Computing the local height pairings at $v\nmid p$ is equivalent to computing the local N\'eron symbol at $v$ (cf.
\cite[\S2]{ColemanGross}). Hence we can combine the results presented in this work with the method of Balakrishnan and Besser, leading to the first algorithm to compute $p$-adic heights for $g\ge2$.

One can define $p$-adic regulators similar to the classical case (see Section \ref{exs}).
Together with Balakrishnan we have computed the $p$-adic regulator for all but one of the modular abelian surfaces considered in \cite{FLSSSW} for all good ordinary $p<100$. This gives rise to an extension of the $p$-adic Birch and Swinnerton-Dyer conjecture for elliptic curves due to Mazur, Tate and Teitelbaum to the case of modular abelian surfaces (cf. \cite[Conjecture~9.1.4]{JenThesis}).  
\end{rk}


\end{document}